\documentclass[11pt,a4paper]{article}

\usepackage[a4paper,margin = 2.5cm]{geometry}
\usepackage{amsmath,amsthm,amssymb}
\usepackage{enumerate,paralist}
\usepackage{multicol}
\usepackage{graphicx}
\usepackage{pictex}
\usepackage{tikz}
\usepackage{ifpdf}   
\AtBeginDocument{\gdef~{\nobreakspace{}}\catcode`\"=12}  
\usepackage[all,tips]{xy}  
\CompileMatrices  
\usepackage[utf8]{inputenc}
\usepackage{fontenc}             

\newcommand{\thmref}[1]{Theorem~\ref{#1}}
\newcommand{\secref}[1]{\S\ref{#1}}
\newcommand{\lemref}[1]{Lemma~\ref{#1}}


\newcommand{\figref}[1]{Figure~\ref{#1}}

\newcommand{\remarkref}[1]{Remark~\ref{#1}}
\newcommand{\itemref}[1]{\ref{#1}}   

\newcommand{\implica}{\mathbin{\rightarrow}}

\newcommand{\bl}{\mathbf}

\newcommand {\MG} {\mathbb{MG}}
\newcommand {\G} {\mathbb{G}}

\DeclareMathOperator{\X}{X}
\DeclareMathOperator{\D}{I}

\theoremstyle{plain}

\newtheorem{thm}{Theorem}[section]
\newtheorem{prop}[thm]{Proposition}
\newtheorem{lem}[thm]{Lemma}
\newtheorem{cor}[thm]{Corollary}

\theoremstyle{definition}
\newtheorem{dfn}[thm]{Definition}
\newtheorem{example}[thm]{Example}

\theoremstyle{remark}
\newtheorem{remark}[thm]{Remark}

\begin{document}

\bibliographystyle{plain} 
\title{An algebraic study of S5-modal Gödel logic}
\author{Diego Castaño, Cecilia Cimadamore, José Patricio Díaz Varela, Laura Rueda}
\maketitle

\begin{abstract}
In this paper we continue the study of the variety $\mathbb{MG}$ of monadic Gödel algebras. These algebras are the equivalent algebraic semantics of the S5-modal expansion of Gödel logic, which is equivalent to the one-variable monadic fragment of first-order Gödel logic. We show three families of locally finite subvarieties of $\mathbb{MG}$ and give their equational bases. We also introduce a topological duality for monadic Gödel algebras and, as an application of this representation theorem, we characterize congruences and give characterizations of the locally finite subvarieties mentioned above by means of their dual spaces. Finally, we study some further properties of the subvariety generated by monadic Gödel chains: we present a characteristic chain for this variety, we prove that a Glivenko-type theorem holds for these algebras and we characterize free algebras over $n$ generators. 
\end{abstract}

\section{Introduction and preliminaries}

\newcounter{saveenum_mbl}

In this article we assume a general knowledge of Hájek's Basic Logic and its equivalent algebraic semantics, the variety of BL-algebras (see \cite{Hajek98}).

In \cite{Hajek98} Hájek introduced the S5-modal expansion $S5(\mathcal{C})$ of any axiomatic extension $\mathcal{C}$ of his Basic Logic $\mathcal{BL}$. This logic is defined on the language of basic logic augmented with the unary connectives $\square$ and $\lozenge$ by interpreting formulas on structures based on $\mathcal{C}$-chains. This expansion is particularly interesting because it is equivalent to the one-variable fragment of the first-order extension of $\mathcal{C}$ (see \cite{Hajek98}).

One of the most important extensions of $\mathcal{BL}$ is Gödel logic $\mathcal{G}$, obtained by adding the axiom $p \to p^2$. This logic has been studied in recent papers (e.g. \cite{CR15,CCDVR17,CCDVR20}).  In \cite{CCDVR20} we showed a strong completeness theorem for $S5(\mathcal{G})$ based on an algebraic semantics for $S5(\mathcal{G})$. This algebraic semantics is the variety $\mathbb{MG}$ of {\em monadic Gödel algebras}. We started the study of this variety in $\cite{CCDVR20}$; the main objective of this article is to expand our understanding of this variety, especially of some of its subvarieties, which naturally correspond to axiomatic extensions of $S5(\mathcal{G})$.

$\mathbb{MG}$ is a subvariety of the variety $\mathbb{MBL}$ of {\em monadic BL-algebras} introduced in \cite{CCDVR17}. Monadic BL-algebras are BL-algebras endowed with two unary operations $\forall$ and $\exists$ that satisfy the following identities:
\begin{multicols}{2}
\begin{enumerate}[(M1)]
\item\label{M1} $\forall x\implica x\approx 1$.
\item\label{M2} $\forall ( x\implica \forall y)\approx \exists x\implica \forall y$.
\item \label{M3}$\forall (\forall x\implica y)\approx \forall x\implica \forall y$.
\item\label{M4} $\forall ( \exists x\vee y)\approx \exists x\vee \forall y$.
\item\label{M5} $\exists (x*x)\approx \exists x*\exists x$.

\phantom{aaaa}
\setcounter{saveenum_mbl}{\value{enumi}}
\end{enumerate}
\end{multicols}
The class $\mathbb{MBL}$ was defined as a candidate for the equivalent algebraic semantics of $S5(\mathcal{BL})$; note that we use $\forall$ and $\exists$ instead of $\square$ and $\lozenge$, respectively. The variety $\mathbb{MG}$ is obtained by adding the identity $x^2 \approx x$ to those for $\mathbb{MBL}$ (see \cite{CCDVR17,CCDVR20}). Note also that the identity (M\ref{M5}) becomes trivial when $x^2 \approx x$ holds.


Monadic Heyting algebras were introduced by Monteiro and Varsavksy in \cite{MoVa57} and later studied in depth by Bezhanishvili in \cite{bezhanishvili98}. In \cite{CCDVR17} we also showed that monadic Gödel algebras coincide with monadic Heyting algebras that satisfy the prelinearity identity $(x \implica y) \vee (y \implica x) \approx 1$ and  identity (M4). 
%

The variety of monadic Gödel algebras contains many subvarieties that are interesting to study. We introduce some of these subvarieties in Section 2; specifically we introduce subvarieties based on the notions of height and width of an algebra. The variety $\mathbb{MG}$ is not locally finite; however, all the subvarieties introduced are proved to be locally finite varieties.

The aim of Section 3 is to give a topological representation of monadic Gödel algebras using Priestley spaces. The duality established is based on the duality given by Cignoli in \cite{Cignoli91} for distributive lattices with an additive closure operator. As applications, we characterize congruences on monadic Gödel algebras by means of saturated closed increasing subsets of the dual space, and we describe the dual spaces of the algebras belonging to the subvarieties introduced in Section 2.

We devote Section 4 to study the subvariety generated by monadic Gödel chains in more depth. First we produce a characteristic chain for this subvariety, that is, a totally ordered algebra that generates the whole variety. Then we prove a Glivenko-type theorem for this variety. Recall that Glivenko showed in \cite{glivenko1929} that a propositional formula is provable in the classical propositional logic if and only if its double negation is provable in the intuitionistic propositional logic. This result has an algebraic formulation, that is, the double negation is a homomorphism from each Heyting algebra onto the Boolean algebra of its regular elements. We prove a Glivenko-type theorem (in an algebraic version) establishing a relation between algebras in this variety and the class of monadic Gödel algebras such that the image of $\exists$ is a Boolean algebra. Finally, we close Section 4 with a full description of free algebras over a finite number of generators in this variety; we give a procedure to calculate the dual spaces of the free algebras.

\

We finish this section summarizing the basic properties of monadic Gödel algebras; all the proofs can be found in \cite{CCDVR17} in the broader context of monadic BL-algebras. For brevity, if $\mathbf{A}$ is a Gödel algebra and we enrich it with a monadic structure, we denote the resulting algebra by $\langle \mathbf{A}, \exists, \forall\rangle$. The next lemma collects some of the basic properties that hold true in any monadic Gödel algebra. We abbreviate ``finitely subdirectly irreducible'' as f.s.i.

\begin{lem} \label{LEMA: props basicas}
Let $\langle \mathbf{A}, \exists, \forall\rangle$ be a monadic Gödel algebra. Then:
\begin{enumerate}[$(1)$]
\item $\exists A = \forall A$;
\item $\exists \mathbf{A}$ is a subalgebra of $\mathbf{A}$;
\item $\exists a = \min\{c \in \exists A: c \geq a\}$ and $\forall a = \max\{c \in \exists A: c \leq a\}$ for every $a \in A$;
\item the lattices of congruences of $\langle \mathbf{A}, \exists, \forall\rangle$ and $\exists \mathbf{A}$ are isomorphic;
\item $\langle \mathbf{A}, \exists, \forall\rangle$ is f.s.i.\ if and only if $\exists \mathbf{A}$ is totally ordered;
\item $\langle \mathbf{A}, \exists, \forall\rangle$ is subdirectly irreducible if and only if $\exists \mathbf{A}$ is totally ordered and there exists $u \in \exists A \setminus \{1\}$ such that $a \leq u$ for all $a \in \exists A \setminus \{1\}$.
\end{enumerate}
\end{lem}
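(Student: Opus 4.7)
The plan is to reduce everything to a few consequences of (M1)--(M4), then to use a congruence/filter correspondence to handle the last two items. First I would extract, by routine derivations from the axioms, the ``hidden'' identities
\[
\forall\forall x = \forall x,\quad \exists\exists x = \exists x,\quad \exists\forall x = \forall x,\quad \forall\exists x = \exists x,
\]
together with monotonicity of both operators and the inequalities $\forall x \le x \le \exists x$. The first two come from instantiating (M3) and (M2) so that the right-hand side collapses to $1$; the cross-equations are obtained analogously, e.g.\ setting $x := \forall y$ in (M2) yields $\forall 1 = \exists\forall y \implica \forall y = 1$ and hence $\exists\forall y \le \forall y$. Item (1) is then immediate, since every $\exists x$ is a fixed point of $\forall$ and vice versa.

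For (3), $\exists a$ lies in $\exists A$ and satisfies $a \le \exists a$; if $c = \exists c \in \exists A$ satisfies $a \le c$, then monotonicity and idempotence give $\exists a \le \exists c = c$, so $\exists a$ is the minimum. The formula for $\forall a$ is dual. For (2), I would verify closure of $\exists A$ under the Gödel-algebra operations: (M4) gives $\forall(a \vee b) = \forall(\exists a \vee b) = \exists a \vee \forall b = a \vee b$ whenever $a, b$ are fixed points; (M3) gives $\forall(a \implica b) = \forall(\forall a \implica b) = \forall a \implica \forall b = a \implica b$; and closure under $\wedge$ (which coincides with $\ast$ in the Gödel setting) follows from monotonicity, since $\exists(a \wedge b) \le \exists a \wedge \exists b = a \wedge b \le \exists(a \wedge b)$. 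The constants $0$ and $1$ are fixed by both operators.

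For (4), I would appeal to the standard bijection between congruences on a BL-algebra and its implicative filters, $\theta \mapsto F_\theta = \{a : (a, 1) \in \theta\}$. The characterization in (3) shows that compatibility of a filter with the monadic operations is determined by its trace on $\exists A$: a BL-filter of $\mathbf{A}$ corresponds to a congruence of $\langle \mathbf{A}, \exists, \forall\rangle$ iff it is generated by its intersection with $\exists A$, and that intersection is itself a filter of $\exists \mathbf{A}$. The assignment $\theta \mapsto \theta \cap (\exists A)^2$ is then the required lattice isomorphism.

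Items (5) and (6) follow by transferring the classical Gödel-algebra facts through (4): a Gödel algebra is f.s.i.\ iff it is a chain, and subdirectly irreducible iff it is a chain with a second-greatest element. Applied to $\exists \mathbf{A}$ (which is a Gödel algebra by (2)), these yield both characterizations. The main obstacle I expect is (4): one must verify that the BL-filter of $\mathbf{A}$ generated by a filter of $\exists \mathbf{A}$ really is compatible with $\forall$ and $\exists$, and this is precisely where the cross-equations $\forall\exists x = \exists x$ and $\exists\forall x = \forall x$ from the first paragraph do the work. Once (4) is in hand, (5) and (6) are formal.
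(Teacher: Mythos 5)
The paper offers no proof of this lemma, deferring entirely to \cite{CCDVR17}, and your outline reproduces the standard argument given there: derive $\forall\forall x=\forall x$, $\exists\exists x=\exists x$, $\forall\exists x=\exists x$, $\exists\forall x=\forall x$, monotonicity and $\forall x\le x\le\exists x$ from (M1)--(M4), identify $\exists A=\forall A$ as the common fixed points, establish the min/max description, pass congruences through monadic filters and their traces on $\exists A$, and transfer (finite) subdirect irreducibility through the resulting congruence-lattice isomorphism. All of these steps are correct as sketched, so your proposal is essentially the same proof as the one the paper relies on.
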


The next lemma includes several arithmetical properties that are used constantly throughout the paper.

\begin{lem}
Let $\langle \mathbf{A}, \exists, \forall\rangle$ be a monadic Gödel algebra. Then, for any $a,b \in A$ and $c \in \exists A$:
\begin{multicols}{2}
\begin{enumerate}[$(1)$]
\item $\forall 1 = \exists 1 = 1$ and $\forall 0 = \exists 0 = 0$;
\item $\forall c = \exists c = c$;
\item $\forall a \leq a \leq \exists a$;
\item if $a \leq b$, then $\forall a \leq \forall b$ and $\exists a \leq \exists b$;
\item $\forall (a \vee c) = \forall a \vee c$;
\item $\exists (a \vee b) = \exists a \vee \exists b$;
\item $\forall (a \wedge b) = \forall a \wedge \forall b$;
\item $\exists (a \wedge c) = \exists a \wedge c$;
\item $\forall (a \implica c) = \exists a \implica c$;
\item $\exists (a \implica c) \leq \forall a \implica c$;
\item $\forall (c \implica a) = c \implica \forall a$;
\item $\exists (c \implica a) \leq c \implica \exists a$;
\item $\forall \neg a = \neg \exists a$;
\item $\exists \neg a \leq \neg \forall a$.
\end{enumerate}
\end{multicols}
\end{lem}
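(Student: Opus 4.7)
The plan is to derive each of the fourteen items from three ingredients: the monadic axioms (M1)--(M4); the min/max characterization $\exists a = \min\{c \in \exists A : c \geq a\}$ and $\forall a = \max\{c \in \exists A : c \leq a\}$ from \lemref{LEMA: props basicas}(3); and the fact that $\exists A = \forall A$ is a subalgebra by \lemref{LEMA: props basicas}(2). Residuation $x \wedge y \leq z$ iff $x \leq y \implica z$ in the underlying Gödel algebra will handle the identities involving $\implica$.

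I would begin with the routine items. (1) is immediate from $0,1 \in \exists A$ and the min/max characterization; (2) follows because the min of $\{d \in \exists A : d \geq c\}$ is $c$ itself when $c \in \exists A$, and dually for $\forall$; (3) is just \lemref{LEMA: props basicas}(3); and (4) is monotonicity of min/max. For the sup/inf items, (5) is (M4) applied to $x := c$ using $\exists c = c$; for (6) and (7), monotonicity gives one inequality, and the reverse uses the closure of $\exists A$ under $\vee,\wedge$ together with the min/max characterization---for instance $\exists a \vee \exists b \in \exists A$ dominates $a \vee b$, so $\exists(a \vee b) \leq \exists a \vee \exists b$.

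For the implicative items, (9) and (11) are instances of (M2) and (M3) with the bound variable replaced by a fixed $c$ (using $\forall c = c$). Items (10) and (12) follow from monotonicity of $\implica$ (contravariant in the first argument, covariant in the second) combined with the observation that $\forall a \implica c$ and $c \implica \exists a$ already lie in $\exists A$; for instance $a \implica c \leq \forall a \implica c$ gives $\exists(a \implica c) \leq \exists(\forall a \implica c) = \forall a \implica c$. Items (13) and (14) are then (9) and (10) specialized to $c = 0$. The main obstacle is item (8): monotonicity immediately yields $\exists(a \wedge c) \leq \exists a \wedge c$, but the reverse inequality requires a residuation trick. Starting from $a \wedge c \leq \exists(a \wedge c)$ one gets $a \leq c \implica \exists(a \wedge c)$; since the right-hand side belongs to $\exists A$, the min characterization gives $\exists a \leq c \implica \exists(a \wedge c)$, and a second application of residuation yields $\exists a \wedge c \leq \exists(a \wedge c)$.
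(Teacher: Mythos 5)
Your proposal is correct: every item checks out, including the residuation argument for item (8), which is indeed the only step that does not follow directly from monotonicity and the closure of $\exists A$ under the Gödel operations. The paper itself gives no proof of this lemma (it defers to the monadic BL-algebra setting of \cite{CCDVR17}), and your derivation from the min/max characterization in \lemref{LEMA: props basicas}(3) together with (M1)--(M4) is essentially the standard argument used there, simplified by the fact that $*$ coincides with $\wedge$ in Gödel algebras.
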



In \cite{CCDVR17} we give a characterization of those subalgebras of a given BL-algebra that may be the range of the quantifiers $\forall$ and $\exists$. Given a BL-algebra $\mathbf{A}$, we say that a subalgebra $\mathbf{C} \leq \mathbf{A}$ is {\em $m$-relatively complete} if the following conditions hold:
\begin{enumerate}[(s1)]
\item For every $a \in A$, the subset $\{c \in C: c \leq a\}$ has a greatest element and $\{c \in C: c \geq a\}$ has a least element.
\item For every $a \in A$ and $c_1,c_2 \in C$ such that $c_1 \leq c_2 \vee a$, there exists $c_3 \in C$ such that $c_1 \leq c_2 \vee c_3$ and $c_3 \leq a$.
\item For every $a \in A$ and $c_1 \in C$ such that $a * a \leq c_1$, there exists $c_2 \in C$ such that $a \leq c_2$ and $c_2 * c_2 \leq c_1$.
\end{enumerate}
Under certain circumstances these conditions can be simplified. For example, if $\mathbf{A}$ is finite, condition (s1) is trivially satisfied. If $\mathbf{C}$ is totally ordered, condition (s2) may be replaced by the following simpler equivalent form:
\begin{enumerate}
\item[(s2$_\ell$)] If $1 = c \vee a$ for some $c \in C$, $a \in A$, then $c = 1$ or $a = 1$.
\end{enumerate}
If $\mathbf{A}$ is a Gödel algebra, condition (s3) is immediate since $*$ coincides with $\wedge$. 

Given a BL-algebra $\mathbf{A}$ and an $m$-relatively complete subalgebra $\mathbf{C} \leq \mathbf{A}$, if we define on $A$ the operations $$\exists a := \min \{c \in C: c \geq a\}, \qquad \forall a := \max \{c \in C: c \leq a\},$$ then $\langle \mathbf{A}, \exists, \forall\rangle$ is a monadic BL-algebra such that $\forall A = \exists A = C$.

\section{Some locally finite subvarieties}
\label{sec: subvar loc fin}

In this section we introduce some subvarieties of $\mathbb{MG}$. We prove that there are equations that constrain the ``height'' or ``width'' of an algebra, and that the subvarieties determined by these conditions are locally finite.

We start by recalling that the variety $\mathbb{MG}$ is not locally finite. In fact, there are f.s.i.\ algebras that are not locally finite. For example, consider the monadic Gödel algebra $\mathbf{A} := \langle [0,1]_G^\mathbb{N}, \exists, \forall\rangle$, where $[0,1]_G$ is the standrad Gödel algebra and $\exists A$ is the set of constant sequences in $[0,1]_G^\mathbb{N}$. Let $a \in A$ be the sequence defined by $a(n) := 1 - \frac{1}{n}$, $n \in \mathbb{N}$. We claim that the subuniverse of $\mathbf{A}$ generated by $a$ is infinite. Indeed, consider the following sequences defined by recursion: $a_1 = a$; $a_{k+1} := a_k \vee (a_k \implica \forall a_k)$ for $k \in \mathbb{N}$. It is straightforward to check that $a_k(n) = 1$ for $n < k$, and $a_k(n) = 1 - \frac{1}{n}$ for $n \geq k$. Since all these sequences are different, the subalgebra generated by $a$ in $\mathbf{A}$ is infinite.

We now introduce the notions of ``height'' and ``width'' that allow us to give examples of locally finite subvarieties of $\mathbb{MG}$.

It is known from \cite{HechKat} that the variety of Gödel algebras generated by the chain with $n$ elements is characterized in $\G$ by means of the equation $$\displaystyle\bigvee_{i=1}^n(x_i \implica x_{i+1})\approx 1.$$ We denote by $\mathbb{H}_n$ the subvariety of $\MG$ characterized by this identity. The f.s.i.\ algebras in $\mathbb{H}_n$ are precisely the monadic Gödel algebras $\langle \mathbf{A}, \exists, \forall\rangle$ where $\mathbf{A}$ is a subdirect product of Gödel chains with at most $n$ elements; observe that $\exists \mathbf{A}$ is a Gödel chain with at most $n$ elements. We say that the ``height'' of these algebras is at most $n$. It is worth noting that $\mathbb{H}_2$ is the variety of monadic Boolean algebras.

We can define larger subvarieties of $\mathbb{MG}$ if we only require the subalgebra $\exists \mathbf{A}$ to be of finite height. Let $\mathbb{H}^\exists_n$ be the subvariety of $\MG$ axiomatized by the equation $$\displaystyle\bigvee_{i=1}^n(\exists x_i \implica \exists x_{i+1})\approx 1.$$ A f.s.i.\ monadic Gödel algebra $\langle \mathbf{A}, \exists, \forall\rangle$ belongs to $\mathbb{H}^\exists_n$ if and only if $\exists \mathbf{A}$ is a Gödel chain with at most $n$ elements.

Observe that, for any monadic Gödel algebra $\langle \mathbf{A}, \exists, \forall\rangle$, we have that $\langle \mathbf{A}, \exists, \forall\rangle \in \mathbb{H}^\exists_2$ if and only if $\exists \mathbf{A}$ is
a Boolean algebra. Moreover, $\langle \mathbf{A},\exists,\forall\rangle$ is a f.s.i.\ algebra in $\mathbb{H}^\exists_2$ if and only if $\exists A = \{0,1\}$. Now, since the two-element chain is the only simple Gödel algebra, by Lemma \ref{LEMA: props basicas} (4) $\langle \mathbf{A},\exists,\forall\rangle$ is a simple algebra in $\mathbb{MG}$ if and only if $\exists A = \{0,1\}$. Thus $\mathbb{H}_2^\exists$ is the subvariety of $\mathbb{MG}$ generated by its simple members. It is easy to see that $\mathbb{H}^\exists_2$ is a discriminator variety (in fact, the largest one contained in $\mathbb{MG}$) since the term 
$$t(x,y,z) := (\forall((x \implica y)\wedge (y \implica x)) \wedge z)\vee (\neg\forall((x \implica y)\wedge (y \implica x)) \wedge x)$$ gives the ternary discriminator function on each f.s.i.\ (simple) algebra in  $\mathbb{H}^\exists_2$.

The varieties $\mathbb{H}_n$ and $\mathbb{H}^\exists_n$ are characterized by some bounded ``height''. We can also define subvarieties of $\mathbb{MG}$ based on a notion of ``width''. Consider the identity
\begin{equation}
\bigwedge_{1\leq i<j\leq k+1} \forall(x_i \vee x_j) \implica \bigvee_{i=1}^{k+1} \forall x_i \approx 1 \tag{$\alpha_k$}
\end{equation}
If a monadic Gödel algebra $\mathbf{A}$ satisfies equation $(\alpha_k)$ for some $k$, we say that $\mathbf{A}$ has {\em finite width}; the {\em width} of $\mathbf{A}$ is the least of such $k$. We denote by $\mathbb{W}_k$ the subvariety of $\mathbb{MG}$ determined by equation $(\alpha_k)$. The following result justifies this terminology. An {\em orthogonal set} in a (monadic) Gödel algebra $\mathbf{A}$ is a subset $S \subseteq A \setminus \{1\}$ such that $x \vee y = 1$ for every $x,y \in S$, $x \ne y$.

\begin{thm} \label{TEO: equivalencias para ancho k}
Let $\langle \mathbf{A}, \exists, \forall\rangle$ be a f.s.i.\ monadic Gödel algebra. The following conditions are equivalent:
\begin{enumerate}[$(1)$]
\item $\langle \mathbf{A}, \exists, \forall\rangle$ satisfies equation $(\alpha_k)$;
\item any orthogonal set in $\mathbf{A}$ has at most $k$ elements;
\item there are prime filters $P_1, \ldots, P_r$ in $\mathbf{A}$, $r \leq k$, such that $\bigcap_{i=1}^r P_i = \{1\}$ and $P_i \cap \exists A = \{1\}$ for $1 \leq i \leq r$.
\end{enumerate}
\end{thm}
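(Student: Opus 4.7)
The plan is to establish $(1)\Leftrightarrow(2)$ directly by orthogonal-set constructions, then $(3)\Rightarrow(2)$ by pigeonhole, and finally $(2)\Rightarrow(3)$ by taking a maximal orthogonal set and constructing prime filters via the prime filter theorem. For $(1)\Rightarrow(2)$, given an orthogonal set $\{a_1,\dots,a_{k+1}\}$ one has $\forall(a_i\vee a_j)=\forall 1=1$ for $i\neq j$, so the left-hand side of $(\alpha_k)$ is $1$ and hypothesis $(\alpha_k)$ forces $\bigvee_l\forall a_l=1$; since $\mathbf{A}$ is f.s.i.\ the chain $\exists\mathbf{A}$ contains this join, so $\forall a_l=1$ and thus $a_l=1$ for some $l$, contradicting orthogonality. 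For $(2)\Rightarrow(1)$, suppose $(\alpha_k)$ fails on $x_1,\dots,x_{k+1}$: in the chain $\exists\mathbf{A}$ one has $c:=\bigwedge_{i<j}\forall(x_i\vee x_j)>m:=\bigvee_l\forall x_l$. Set $a_l:=c\implica x_l$; using the Gödel identity $c\implica(u\vee v)=(c\implica u)\vee(c\implica v)$ (valid by prelinearity) together with $c\leq\forall(x_i\vee x_j)\leq x_i\vee x_j$, one gets $a_i\vee a_j=c\implica(x_i\vee x_j)=1$. Moreover $a_l=1$ iff $c\leq x_l$, which (since $c\in\exists A$ and by the definition of $\forall x_l$) is equivalent to $c\leq\forall x_l\leq m$, contradicting $c>m$. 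So $\{a_l\}$ is orthogonal of size $k+1$, violating (2).

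For $(3)\Rightarrow(2)$, given orthogonal $\{b_1,\dots,b_s\}$, each $b_m\neq 1$ does not lie in $\bigcap_{i=1}^r P_i=\{1\}$, so $b_m\notin P_{l(m)}$ for some index $l(m)\leq r$. If $s>k\geq r$, pigeonhole gives $m\neq m'$ with $l(m)=l(m')=l$; then $b_m,b_{m'}\notin P_l$ while $b_m\vee b_{m'}=1\in P_l$ contradicts primeness of $P_l$. So $s\leq r\leq k$.

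The main work is $(2)\Rightarrow(3)$. Take a maximal orthogonal set $\{a_1,\dots,a_r\}$; then $r\leq k$ by (2). For each $i$, let $F_i$ be the filter generated by $\{a_j:j\neq i\}$ and $I_i$ the ideal generated by $\{a_i\}\cup(\exists A\setminus\{1\})$; the key verification is $F_i\cap I_i=\emptyset$. Supposing $\bigwedge_{j\neq i}a_j\leq a_i\vee c$ for some $c\in\exists A\setminus\{1\}$, joining both sides with $a_i$ and distributing gives $1=\bigwedge_{j\neq i}(a_i\vee a_j)\leq a_i\vee c$, so $a_i\vee c=1$; applying $\forall$ and the identity $\forall(a_i\vee c)=\forall a_i\vee c$ together with the chain structure of $\exists\mathbf{A}$ forces $\forall a_i=1$ or $c=1$, both impossible. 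Thus the prime filter theorem yields $P_i\supseteq F_i$ with $P_i\cap I_i=\emptyset$, giving $a_j\in P_i$ for $j\neq i$, $a_i\notin P_i$, and $P_i\cap\exists A=\{1\}$.

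The remaining and most delicate step is to show $\bigcap P_i=\{1\}$. If $b\in\bigcap P_i\setminus\{1\}$, one checks first that $b\not\leq a_j$ for any $j$ (otherwise $b\in P_j$ forces $a_j\in P_j$) and that $b\vee a_j\in\bigcap P_l$ for every $j$. In the favourable case $b\vee a_j=1$ for all $j$, the set $\{a_1,\dots,a_r,b\}$ is an orthogonal extension of size $r+1$, immediately contradicting maximality. Ruling out the contrary case, where $b\vee a_j<1$ for some $j$, is the principal obstacle; my plan is to replace $a_j$ by $a_j\vee b$ (which preserves orthogonality of size $r$ but produces a new family $\{P_i'\}$ with $b\notin P_j'$, hence $b\notin\bigcap P_i'$) and to iterate this substitution within a Zorn argument over orthogonal $r$-tuples ordered componentwise to obtain a maximal orthogonal set for which no such $b$ exists, thereby forcing $\bigcap P_i=\{1\}$.
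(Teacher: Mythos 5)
Your implications $(1)\Rightarrow(2)$, $(2)\Rightarrow(1)$ and $(3)\Rightarrow(2)$ are correct. The first coincides with the paper's argument; the other two form a different cycle from the paper's (which instead proves $(3)\Rightarrow(1)$ by embedding $\mathbf{A}$ into a product of $r\leq k$ chains $\mathbf{A}/P_i$ and running a pigeonhole argument on coordinates), and your direct contrapositive for $(2)\Rightarrow(1)$, via the elements $a_l:=c\implica x_l$ and the prelinearity identity $c\implica(u\vee v)=(c\implica u)\vee(c\implica v)$, is a perfectly good alternative.

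The genuine gap is in $(2)\Rightarrow(3)$, exactly where you flag it. The prime filters produced by the Prime Filter Theorem from $F_i$ and $I_i$ are not canonical and may be strictly larger than needed, and then $\bigcap_i P_i=\{1\}$ can simply fail: take $\mathbf{A}$ to be the product of two three-element chains $\{0,u,1\}\times\{0,v,1\}$ with $\exists A=\{(0,0),(1,1)\}$ and the maximal orthogonal set $\{(1,0),(0,1)\}$; the choices $P_1=\{(x,y):y\geq v\}$ and $P_2=\{(x,y):x\geq u\}$ satisfy all your requirements but $(u,v)\in P_1\cap P_2$. Your proposed repair by a Zorn argument over orthogonal $r$-tuples ordered componentwise does not go through as stated: upper bounds of chains of such tuples would require arbitrary joins, which need not exist in $\mathbf{A}$, and even at a componentwise-maximal tuple you would still face the same non-canonicity of the $P_i$. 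The paper sidesteps all of this by defining the filters explicitly as $P_i:=\{x\in A: x\vee s_i=1\}$ for a maximum-size orthogonal set $\{s_1,\ldots,s_r\}$. With this definition, $\bigcap_i P_i=\{1\}$ is immediate (any $x\ne 1$ in the intersection would extend the orthogonal set), $P_i\cap\exists A=\{1\}$ follows from condition (s2$_\ell$), and primeness is obtained by showing that if $x\vee y\in P_i$ with $x,y\notin P_i$, then $\{x\vee s_i,\,y\vee s_i\}\cup S\setminus\{s_i\}$ would be an orthogonal set of size $r+1$. Replacing your filter construction with this canonical one closes the gap.
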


\begin{proof}
We first prove that $(1)$ implies $(2)$. Assume $(1)$ holds and suppose there is an orthogonal set $S = \{s_1,\ldots,s_k,s_{k+1}\}$ with $k+1$ elements. Then $\forall(s_i \vee s_j) = \forall 1 = 1$ for every $i \ne j$, but $\bigvee_{i=1}^{k+1} \forall s_i = \forall s_{i_0} \leq s_{i_0} < 1$ for some $i_0$, since $\exists \mathbf{A}$ is totally ordered. This contradicts the validity of $(\alpha_k)$.

Now assume condition $(2)$ is true. Let $r \leq k$ be the maximal cardinality of orthogonal sets in $\mathbf{A}$ and fix an $r$-element orthogonal set $S = \{s_1,\ldots,s_r\}$. Consider the sets $P_i = \{x \in A: x \vee s_i = 1\}$. We claim that $P_i$ are the desired prime filters. Indeed, it is clear that $P_i$ are filters on $\mathbf{A}$. To prove that $P_i$ is prime, suppose $x \vee y \in P_i$ but $x,y \notin P_i$. Since $(x \vee s_i) \vee (y \vee s_i) = 1$, the set $S' = \{x \vee s_i, y \vee s_i\} \cup S \setminus \{s_i\}$ is an orthogonal set. Hence $S'$ must contain at most $r$ elements. There are three possibilities: if  $x \vee s_i = s_j$, $i \ne j$, then $x \vee s_i = s_j \vee s_i = 1$, so $x \in P_i$, a contradiction; if $y \vee s_i = s_j$, $i \ne j$, then $y \vee s_i = s_j \vee s_i = 1$, so $y \in P_i$, a contradiction; if $x \vee s_i = y \vee s_i$, then $x \vee s_i = x \vee y \vee s_i = 1$, so $x \in P_i$, a contradiction. Thus $P_i$ is prime for $1 \leq i \leq r$. Moreover, if $x \in \bigcap_{i=1}^r P_i$, it must be that $x = 1$, since otherwise $S \cup \{x\}$ would be an orthogonal set with $r+1$ elements. Finally, note that if $c \in P_i \cap \exists A$, then $c \vee s_i = 1$ and, by condition $(s2)$, $c = 1$. This concludes the proof that $(2)$ implies $(3)$.

Finally we prove that $(3)$ implies $(1)$. By the assumption we may consider $\mathbf{A} \leq \mathbf{A}_1 \times \ldots \times \mathbf{A}_r$ with $r \leq k$, where each $\mathbf{A}_i$ is totally ordered. Let $\bar{a}_1,\ldots,\bar{a}_{k+1} \in A$. Let $\bar{b} = \bigwedge_{j < j'} (\bar{a}_j \vee \bar{a}_{j'})$. For each $i \in \{1,\ldots,r\}$, consider the set $S_i := \{\bar{a}_j(i): 1 \leq j \leq k+1\} \subseteq A_i$ and let $j_i \in \{1,\ldots,k+1\}$ be such that $\bar{a}_{j_i}(i) = \min S_i$. Since $\{j_1,\ldots,j_r\}$ is a proper subset of $\{1,\ldots,k+1\}$, there is $j^* \in \{1,\ldots,k+1\}$ such that $j^* \ne j_i$ for $1 \leq i \leq r$. Thus $\bar{b} \leq \bar{a}_{j^*}$ and we have that $$\bigwedge_{j \ne j'} \forall (\bar{a}_j \vee \bar{a}_{j'}) = \forall \bigwedge_{j \ne j'} (\bar{a}_j \vee \bar{a}_{j'}) = \forall \bar{b} \leq \forall \bar{a}_{j^*} \leq \bigvee_{j=1}^{k+1} \forall \bar{a}_j.$$ This proves that $(\alpha_k)$ holds in $\mathbf{A}$.
\end{proof}

\begin{cor} \label{COR: representacion de algebras de ancho k}
If $\langle \mathbf{A},\exists,\forall\rangle$ is a f.s.i.\ monadic Gödel algebra of width $k$, there are totally ordered Gödel algebras $\mathbf{A}_1,\ldots,\mathbf{A}_k$ and an embedding $\varphi\colon \mathbf{A} \to \mathbf{A}_1 \times \ldots \times \mathbf{A}_k$ such that $\pi_i \circ \varphi|_{\exists A}$ is an embedding of $\exists \mathbf{A}$ into $\mathbf{A}_i$ for $1 \leq i \leq k$ (here $\pi_i$ is the projection on the $i$-th component). 
\end{cor}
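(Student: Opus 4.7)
The plan is to apply Theorem \ref{TEO: equivalencias para ancho k} directly. Since $\langle \mathbf{A}, \exists, \forall\rangle$ is f.s.i.\ of width $k$, condition $(3)$ of that theorem yields prime filters $P_1,\ldots,P_r$ in $\mathbf{A}$ with $r\le k$, $\bigcap_{i=1}^r P_i=\{1\}$, and $P_i\cap \exists A=\{1\}$ for each $i$. Since $\mathbf{A}$ is a Gödel (hence prelinear BL-) algebra, each quotient $\mathbf{A}_i:=\mathbf{A}/P_i$ by a prime filter is a totally ordered Gödel algebra. I would then define $\varphi_0\colon \mathbf{A}\to \mathbf{A}_1\times\cdots\times\mathbf{A}_r$ as the canonical map $a\mapsto (a/P_1,\ldots,a/P_r)$.

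Next I would verify the two injectivity statements. For $\varphi_0$ itself: if $\varphi_0(a)=\varphi_0(b)$, then $(a\implica b)\wedge(b\implica a)\in P_i$ for every $i$, so this element lies in $\bigcap_{i=1}^r P_i=\{1\}$, forcing $a=b$. For the restriction, fix $i$ and take distinct $c_1,c_2\in \exists A$. By \lemref{LEMA: props basicas}$(5)$, $\exists\mathbf{A}$ is totally ordered, so we may assume $c_1<c_2$; by \lemref{LEMA: props basicas}$(2)$, $\exists\mathbf{A}$ is a subalgebra, so $(c_1\implica c_2)\wedge(c_2\implica c_1)=c_2\implica c_1$ lies in $\exists A$, and it is strictly below $1$ because in a Gödel chain $c_2\implica c_1<1$ whenever $c_1<c_2$. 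Therefore $c_2\implica c_1\in \exists A\setminus\{1\}$, and the hypothesis $P_i\cap \exists A=\{1\}$ shows it is outside $P_i$, so $c_1/P_i\ne c_2/P_i$ in $\mathbf{A}_i$.

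If $r=k$ we are done by setting $\varphi:=\varphi_0$. If $r<k$, I would pad by taking $\mathbf{A}_{r+1}=\cdots=\mathbf{A}_k:=\mathbf{A}_r$ and defining $\varphi\colon \mathbf{A}\to \mathbf{A}_1\times\cdots\times\mathbf{A}_k$ by repeating the $r$-th coordinate of $\varphi_0$ in positions $r+1,\ldots,k$; this clearly preserves both the injectivity of $\varphi$ and the property that each $\pi_i\circ\varphi|_{\exists A}$ is a Gödel embedding of $\exists\mathbf{A}$ into $\mathbf{A}_i$.

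I do not expect a substantive obstacle: the work has really been done in \thmref{TEO: equivalencias para ancho k}. The only delicate point is that embedding $\mathbf{A}$ into a product of chains is not in itself enough — one also needs the projections to be injective on $\exists A$. This is precisely the role played by the condition $P_i\cap \exists A=\{1\}$ in clause $(3)$, which, combined with the total order on $\exists\mathbf{A}$ from f.s.i.-ness and the behavior of $\implica$ in a Gödel chain, gives exactly the separation required.
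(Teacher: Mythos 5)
Your proposal is correct and follows essentially the same route as the paper: invoke clause $(3)$ of \thmref{TEO: equivalencias para ancho k}, quotient by the prime filters $P_i$, and use $\bigcap_i P_i=\{1\}$ and $P_i\cap\exists A=\{1\}$ for the two injectivity claims, which you verify in more detail than the paper does. The only cosmetic difference is that you pad when $r<k$, whereas the paper implicitly uses that width exactly $k$ forces a $k$-element orthogonal set (via the equivalence of $(1)$ and $(2)$) and hence $r=k$; either way the argument is sound.
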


\begin{proof}
Since $\langle \mathbf{A},\exists,\forall\rangle$ has width $k$, the previous theorem produces prime filters $P_1,\ldots,P_k$ in $\mathbf{A}$ such that $\bigcap_{i=1}^k P_i = \{1\}$ and $P_i \cap \exists A = \{1\}$ for $1 \leq i \leq k$. We thus get an embedding $\varphi\colon \mathbf{A} \to \mathbf{A}/P_1 \times \ldots \times \mathbf{A}/P_k$ with the desired property.
\end{proof}

\begin{remark}
Note that $\mathbb{W}_1$ is the variety generated by monadic Gödel chains. In addition, observe that equation $(\alpha_1)$ is equivalent to 
\begin{enumerate}[(M1)]
\setcounter{enumi}{\value{saveenum_mbl}}
\item\label{axiomac} $\forall(x\vee y) \approx \forall x \vee \forall y$.
\setcounter{saveenum_mbl}{\value{enumi}}
\end{enumerate}
In \cite{CCDVR17} we already proved that the variety generated by chains is axiomatized by this identity in the broader context of monadic BL-algebras.
\end{remark}

\begin{remark}
Observe that, $\mathbf{A} \in \mathbb{H}^\exists_2 \cap \mathbb{W}_1$ is f.s.i.\ if and only if $\mathbf{A}$ is a chain and $\exists A = \{0,1\}$. Then the lattice of subvarieties of $\mathbb{H}^\exists_2 \cap  \mathbb{W}_1$ is isomorphic to that of $\mathbb{G}$, that is, it is a chain of type $\mathbb{N} \oplus \{1\}$. 
\end{remark}

We now prove that all of the subvarieties here introduced are locally finite varieties.

\begin{prop}
The variety $\mathbb{H}_n^\exists$ is locally finite for every $n$.
\end{prop}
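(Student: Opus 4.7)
The plan is to bound uniformly the cardinality of $m$-generated finitely subdirectly irreducible (f.s.i.) algebras in $\mathbb{H}_n^\exists$ as a function of $m$ and $n$. Together with the observation that, up to isomorphism, only finitely many $m$-generated algebras can have cardinality below a fixed bound, Birkhoff's subdirect representation theorem then yields that $F_{\mathbb{H}_n^\exists}(m)$ embeds into a finite product of finite algebras and is therefore finite, which is local finiteness. So let $\langle \mathbf{A},\exists,\forall\rangle$ be an $m$-generated f.s.i.\ algebra in $\mathbb{H}_n^\exists$ with generators $a_1,\ldots,a_m$. By \lemref{LEMA: props basicas}~(5) together with the defining identity of $\mathbb{H}_n^\exists$, $\exists \mathbf{A}$ is a Gödel chain of cardinality at most $n$.

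The key step is a normal-form observation: $\mathbf{A}$ coincides with the Gödel-algebra subalgebra of itself generated by $\{a_1,\ldots,a_m\}\cup \exists A$. The inclusion $\supseteq$ is immediate, since every element of $\exists A$ equals $\exists t(a_1,\ldots,a_m)$ for some monadic term $t$ and hence already lies in the monadic subalgebra, which is also closed under the Gödel operations. For the reverse inclusion, I would argue by induction on the construction of a monadic term: whenever a subterm has the form $\exists s$ or $\forall s$, it evaluates to a fixed element of the finite set $\exists A$ and may therefore be replaced by that element treated as a constant; after all such replacements $t(a_1,\ldots,a_m)$ is a Gödel term in $\{a_1,\ldots,a_m\}\cup \exists A$. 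Thus $\mathbf{A}$ is generated, as a Gödel algebra, by at most $m+n$ elements.

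To conclude I would invoke the classical theorem that the variety $\mathbb{G}$ of Gödel algebras is locally finite, which gives $|A|\leq |F_{\mathbb{G}}(m+n)|$, a bound depending only on $m$ and $n$. The main obstacle I anticipate is the normal-form step: one has to check carefully that, modulo the finite set $\exists A$ of available constants, nested quantifier applications contribute nothing beyond what has already been captured (the induction has to handle quantifiers appearing arbitrarily deep inside Gödel operations). Once that is settled, local finiteness of $\mathbb{G}$ closes the argument without further work.
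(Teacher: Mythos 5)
Your proposal is correct and follows essentially the same route as the paper: restrict to f.s.i.\ algebras, note that $\exists\mathbf{A}$ has at most $n$ elements, observe that $\mathbf{A}$ is then generated as a Gödel algebra by the $m$ generators together with $\exists A$, and invoke local finiteness of $\mathbb{G}$ plus the standard reduction from uniform local finiteness of the f.s.i.\ members to local finiteness of the variety (the paper cites \cite[Theorem 3.7]{bez01} for this last step, which is exactly the argument you sketch). The normal-form induction you flag as the main obstacle is sound and is precisely the justification the paper leaves implicit in the sentence ``Thus $\mathbf{A}$ is a Gödel algebra generated by the set $S \cup \exists A$.''
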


\begin{proof}
Since the variety of Gödel algebras is locally finite, for each natural number $m$ the size of an $m$-generated Gödel algebra has an upper bound $N_m$. Now let $\langle \mathbf{A}, \exists, \forall\rangle$ be a f.s.i.\ monadic Gödel algebra in $\mathbb{H}_n^\exists$ generated by a set $S$ of size $m$. We know that $\exists A$ has at most $n$ elements. Thus $\mathbf{A}$ is a Gödel algebra generated by the set $S \cup \exists A$, which has at most $m+n$ elements. Therefore the size of $\mathbf{A}$ is bound by $N_{m+n}$. This shows that the class of f.s.i.\ algebras in $\mathbb{H}_n^\exists$ is uniformly locally finite. Thus, $\mathbb{H}_n^\exists$ is a locally finite variety (see \cite[Theorem 3.7]{bez01}).
\end{proof}

Since $\mathbb{H}_n \subseteq \mathbb{H}_n^\exists$ we also get the following result.

\begin{cor}
The variety $\mathbb{H}_n$ is locally finite for every $n$.
\end{cor}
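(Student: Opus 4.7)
The statement is immediate once one believes the inclusion $\mathbb{H}_n \subseteq \mathbb{H}_n^\exists$, which is exactly what the text already asserts just before the corollary. So my proof plan has essentially one substantive step.

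First I would verify the inclusion $\mathbb{H}_n \subseteq \mathbb{H}_n^\exists$. The defining identity of $\mathbb{H}_n$ is $\bigvee_{i=1}^n (x_i \implica x_{i+1}) \approx 1$, and since this is a universally quantified identity it must hold under any substitution of terms for the variables. Substituting $\exists x_i$ for each $x_i$ yields precisely the defining identity of $\mathbb{H}_n^\exists$, so every algebra in $\mathbb{H}_n$ lies in $\mathbb{H}_n^\exists$.

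Next I would invoke the preceding proposition, which states that $\mathbb{H}_n^\exists$ is locally finite, together with the general fact that a subvariety of a locally finite variety is locally finite. The latter holds because for any $m$, the free $m$-generated algebra in the subvariety is a homomorphic image of the free $m$-generated algebra in the larger variety, hence finite; equivalently, any $m$-generated algebra in $\mathbb{H}_n$ is an $m$-generated algebra in $\mathbb{H}_n^\exists$, and the bound on the size of such algebras transfers verbatim.

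There is no real obstacle here: the argument is a two-line observation. If one wanted to be fully self-contained one could re-run the proof of the previous proposition with $\mathbb{H}_n^\exists$ replaced by $\mathbb{H}_n$: a f.s.i.\ algebra $\langle \mathbf{A},\exists,\forall\rangle \in \mathbb{H}_n$ has $\mathbf{A}$ itself a subdirect product of Gödel chains of length at most $n$, so $|\exists A| \leq n$, and an $m$-generated such $\mathbf{A}$ is a Gödel algebra generated by at most $m+n$ elements, hence bounded in size by $N_{m+n}$; uniform local finiteness of the f.s.i.\ members then yields local finiteness of the variety by \cite[Theorem 3.7]{bez01}.
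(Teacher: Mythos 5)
Your proposal is correct and follows the paper's own route exactly: the paper derives the corollary from the inclusion $\mathbb{H}_n \subseteq \mathbb{H}_n^\exists$ (obtained by substituting $\exists x_i$ for $x_i$ in the defining identity) together with the local finiteness of $\mathbb{H}_n^\exists$. Your additional self-contained variant is a valid but unnecessary elaboration of the same argument.
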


We now prove that $\mathbb{W}_k$ is also a locally finite variety for every $k$.

\begin{thm} \label{TEO: ancho k son loc fin - caso fsi}
The class of f.s.i.\ algebras in $\mathbb{W}_k$ is uniformly locally finite.
\end{thm}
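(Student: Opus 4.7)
The plan is to apply \corref{COR: representacion de algebras de ancho k} to any finitely generated f.s.i.\ algebra $\langle \mathbf{B}, \exists, \forall\rangle \in \mathbb{W}_k$, say generated by $g_1, \ldots, g_m$; note that $\mathbf{B}$ is itself f.s.i.\ by \lemref{LEMA: props basicas}(5), since $\exists \mathbf{B}$ is a subchain of $\exists \mathbf{A}$. This yields an embedding $\varphi \colon \mathbf{B} \to \mathbf{B}_1 \times \cdots \times \mathbf{B}_k$ into a product of G\"odel chains in which each $\psi_i := \pi_i \circ \varphi|_{\exists \mathbf{B}}$ embeds $\exists \mathbf{B}$ into $\mathbf{B}_i$. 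As a G\"odel algebra $\mathbf{B}$ is generated by $\{g_1, \ldots, g_m\} \cup \exists \mathbf{B}$ (applying $\exists$ or $\forall$ to any term yields an element of $\exists \mathbf{B}$), so $\mathbf{B}_i = \pi_i(\varphi(\mathbf{B}))$ is the G\"odel subalgebra generated by $\{g_j/P_i : 1 \leq j \leq m\} \cup \psi_i(\exists \mathbf{B})$; in a G\"odel chain the subalgebra generated by a set $X$ equals $X \cup \{0, 1\}$, so $|\mathbf{B}_i| \leq m + |\exists \mathbf{B}| + 2$ and consequently $|\mathbf{B}| \leq (m + |\exists \mathbf{B}| + 2)^k$. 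The problem reduces to bounding $|\exists \mathbf{B}|$ uniformly in $\mathbf{B}$.

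For each pair $(i, j)$ let $u_{i, j}$ be the least element with $\psi_i(u_{i, j}) \geq \pi_i(\varphi(g_j))$ and $w_{i, j}$ the greatest with $\psi_i(w_{i, j}) \leq \pi_i(\varphi(g_j))$, passing if necessary to the Dedekind--MacNeille completion of $\exists \mathbf{B}$; set $V := \{0, 1\} \cup \bigcup_{i, j} \{u_{i, j}, w_{i, j}\}$, a set of size at most $2km + 2$. The claim to establish is that $\exists t, \forall t \in V$ for every term $t(g_1, \ldots, g_m)$ representing an element of $\mathbf{B}$, which then yields $|\exists \mathbf{B}| \leq 2km + 2$. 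Writing $u_i(t)$ and $w_i(t)$ for the analogous extrema relative to $\pi_i(\varphi(t))$, the coordinate-wise order in the product gives the key formulas $\exists t = \max_{1 \leq i \leq k} u_i(t)$ and $\forall t = \min_{1 \leq i \leq k} w_i(t)$, so it suffices to prove by induction on the complexity of $t$ that $u_i(t), w_i(t) \in V$ for every $i$.

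The structural ingredient driving the induction is that in the G\"odel chain $\mathbf{B}_i$ any element obtained from a finite set $X$ via $\wedge, \vee, \to$ lies in $X \cup \{0, 1\}$; hence $\pi_i(\varphi(t))$ always collapses to one of $g_j/P_i$, $\psi_i(\exists s)$, $\psi_i(\forall s)$ (for a proper subterm $s$), $0$, or $1$, so $u_i(t)$ and $w_i(t)$ are immediately identified with the corresponding element of $\{u_{i, j}, w_{i, j}, \exists s, \forall s, 0, 1\}$, which by the inductive hypothesis lies in $V$; since the max/min of a finite subset of $V$ in the chain remains in $V$, we conclude $\exists t, \forall t \in V$, giving the uniform bound $|\mathbf{B}| \leq ((2k+1)m + 4)^k$. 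The main obstacle is the case-by-case verification inside the induction, particularly for the implication case $t = t_1 \to t_2$ (handled by $a \to b \in \{1, b\}$ in a chain) and for the nested quantifier cases (handled by the idempotency of $\exists, \forall$ on $\exists \mathbf{B}$), together with the subtle point that the extrema defining $u_{i,j}, w_{i,j}$ may need to be formed in the completion, which is reconciled with $\exists t \in \exists \mathbf{B}$ a posteriori once $|V|$ is shown to be finite.
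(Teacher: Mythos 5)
Your argument is correct and follows essentially the same route as the paper: both reduce via \corref{COR: representacion de algebras de ancho k} to an embedding into a product of $k$ Gödel chains, use the fact that the chain operations create no new elements, and exploit that $\exists$ and $\forall$ are computed coordinatewise through the chain of up-/down-sets of $\exists \mathbf{B}$. The paper packages this as a single finite set $S_1^* \times \cdots \times S_k^*$ closed under all operations, using partial operations $\exists_j,\forall_j$ where you invoke the Dedekind--MacNeille completion, but the content and the resulting bound are the same.
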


\begin{proof}
Let $\langle \mathbf{A}, \exists, \forall\rangle$ be a f.s.i.\ monadic Gödel algebra of width $k$. Let $\varphi\colon \mathbf{A} \to \mathbf{A}_1 \times \ldots \times \mathbf{A}_k$ be a subdirect representation of $\mathbf{A}$ as given by Corollary \ref{COR: representacion de algebras de ancho k}. Without loss of generality assume that $\pi_j \circ \varphi$ is the identity on $\exists A$, thus $\exists \mathbf{A}$ is a subalgebra of each $\mathbf{A}_j$.

For each $j$ define two {\em partial} operations $\exists_j, \forall_j$ on $\mathbf{A}_j$ by $$\exists_j x := \min\{c \in \exists A: c \geq x\}, \qquad \forall_j x := \max\{c \in \exists A: c \leq x\},$$ provided these elements exist.

Fix $a_1,\ldots,a_n \in A$ and let $\mathbf{B}$ be the subalgebra of $\langle \mathbf{A}, \exists, \forall\rangle$ generated by $\{a_1,\ldots,a_n\}$. We show that $\mathbf{B}$ is finite.

Put $\varphi(a_i) = (a_{i1},\ldots,a_{ik})$, $1 \leq i \leq n$. For each $j$, let $S_j := \{a_{ij}: 1 \leq i \leq n\} \cup \{0,1\}$, and put $C_j := \forall_j S_j \cup \exists_j S_j$. Let $S_j^* := S_j \cup C_1 \cup \ldots \cup C_k$. Observe that $S_j^*$ is finite for every $j$. We claim that $\varphi(B) \subseteq S_1^* \times \ldots \times S_k^*$ which suffices to prove that $B$ is finite. Indeed, we show that $A' := \varphi^{-1}(S_1^* \times \ldots \times S_k^*)$ is a subuniverse of $\langle \mathbf{A}, \exists, \forall\rangle$. It is clear that $A'$ is closed under Gödel operations. Now fix $x \in A'$ and put $\varphi(x) = (x_1,\ldots,x_k)$. Consider the down-sets $D_j := \{c \in \exists A: c \leq x_j\}$, $1 \leq j \leq k$. Then $\bigcap_{j=1}^k D_j = \{c \in \exists A: c \leq x\}$ and $\max \bigcap_{j=1}^k D_j$ exists (in fact, it is precisely $\forall x$). Since the family of down-sets $\{D_j: 1 \leq j \leq k\}$ is a chain (because $\exists \mathbf{A}$ is totally ordered), there is $j_0$ such that $\bigcap_{j=1}^k D_j = D_{j_0}$. Hence $\forall x = \max \bigcap_{j=1}^k D_j = \max D_{j_0} = \forall_{j_0} x_{j_0}$. Now, since $x_{j_0} \in S_{j_0}^*$, we get that $\forall_{j_0} x_{j_0} \in S_j^*$ for $1 \leq j \leq k$, so $\varphi(\forall x) = (\forall x, \ldots, \forall x) = (\forall_{j_0} x_{j_0}, \ldots, \forall_{j_0} x_{j_0}) \in S_1^* \times \ldots \times S_k^*$, which proves that $\forall x \in A'$. Analogously $\exists x \in A'$. This proves that $A'$ is a finite subuniverse of $\mathbf{A}$ containing $B$.

Finally, note that $|B| \leq |S_1^* \times \ldots \times S_k^*| \leq (n+2+k(2n+4))^k$. Thus the size of $n$-generated subalgebras of $\langle \mathbf{A}, \exists, \forall\rangle$ is uniformly bound.
\end{proof}

Applying \cite[Theorem 3.7]{bez01} we get the following corollary.

\begin{cor} \label{COR: ancho k son loc fin}
The variety $\mathbb{W}_k$ is locally finite for every $k$.
\end{cor}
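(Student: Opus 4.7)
My plan is to obtain this corollary as an immediate consequence of \thmref{TEO: ancho k son loc fin - caso fsi} via the standard bridge \cite[Theorem 3.7]{bez01}: if the class of finitely subdirectly irreducible algebras of a variety $\mathbb{V}$ is uniformly locally finite, then $\mathbb{V}$ itself is locally finite. Since the preceding theorem supplies precisely this hypothesis for $\mathbb{W}_k$, the corollary drops out with a one-line citation, and there is no residual substantive work to be done.

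For the record, the cited theorem is proved by the usual subdirect-decomposition scheme, which I would execute in three steps if forced to reprove it in place. First, I would write an arbitrary $n$-generated $\mathbf{A} \in \mathbb{W}_k$ as a subdirect product of f.s.i.\ quotients $\mathbf{A}/\theta_\alpha \in \mathbb{W}_k$, each of which is $n$-generated by the images of the generators of $\mathbf{A}$, and hence of size at most $N := (n+2+k(2n+4))^k$ by the explicit bound extracted from the proof of \thmref{TEO: ancho k son loc fin - caso fsi}. Second, I would observe that, up to isomorphism, there are only finitely many algebras of size $\leq N$ in the signature of $\mathbb{MG}$, and that the number of distinct surjections from the term algebra on $n$ letters to any fixed such algebra is likewise bounded by a function of $n$ and $N$. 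Third, these bounds combine to cap the number of essentially distinct projections $\mathbf{A} \to \mathbf{A}/\theta_\alpha$, placing $\mathbf{A}$ inside a fixed finite product and hence bounding $|\mathbf{A}|$ uniformly in $n$ (and $k$).

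The only real obstacle in the whole chain of reasoning was the explicit uniform bound on f.s.i.\ members, and that has already been carried out in \thmref{TEO: ancho k son loc fin - caso fsi} through the careful tracking of the sets $S_j^*$. The passage from the f.s.i.\ case to the full variety is a purely structural move, so my preferred route is simply to quote \cite[Theorem 3.7]{bez01} and be done, exactly as the paper itself elects to do.
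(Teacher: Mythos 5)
Your proposal is correct and follows exactly the paper's route: the corollary is obtained by citing \cite[Theorem 3.7]{bez01} together with Theorem~\ref{TEO: ancho k son loc fin - caso fsi}, which supplies the uniform local finiteness of the f.s.i.\ members of $\mathbb{W}_k$. The additional sketch of how the bridge theorem itself would be proved is sound but unnecessary, since the paper likewise treats this step as a one-line citation.
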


We close this section with an example that shows that there is much more to say about locally finite algebras in $\mathbb{MG}$.

\begin{example}
Let $\mathbf{A} := \langle [0,1]_G, \exists, \forall\rangle$ where $\exists A = [0,1]$, and let $\mathbf{B} := \langle [0,1]_G^\mathbb{N}, \exists, \forall\rangle$ where $\exists B = \{0,1\}$. Observe that both $\mathbf{A}$ and $\mathbf{B}$ are f.s.i.\ algebras in $\mathbb{MG}$; in addition, note that $\mathbf{A} \in \mathbb{W}_1$ but has infinite height; on the other hand, $\mathbf{B} \in \mathbb{H}_2^\exists$ but has infinite width. Now consider the ordinal sum $\mathbf{C} := \mathbf{A} \oplus \mathbf{B}$ where the top element in $\mathbf{A}$ is identified with the bottom element in $\mathbf{B}$. It is straightforward to check that $\mathbf{C}$ is locally finite, but has infinite width and $\exists C$ is also infinite.
\end{example}


\section{Priestley-type topological representation}\label{sec:espacioPriestleyMG}

In this section we give a topological representation for monadic Gödel algebras. As an application, we characterize congruences by means of saturated closed increasing subsets of the dual space. We also describe the subvarieties introduced in the previous section by means of their dual spaces. We will see in the next section that this duality is a most useful tool to characterize free algebras in the variety $\mathbb{W}_1$.

We start by recalling the definitions needed to state the duality. For a poset $\langle X,\leq\rangle$ and $Y \subseteq X$, let $(Y]=\{x\in X : x \leq
y \ \mbox{for some} \ y \in Y \}$ and $[Y)=\{x\in X  :  x \geq y \
\mbox{for some} \ y \in Y \}$. We write $[x)$, $(x]$ instead of
$[\{x\})$, $(\{x\}]$, respectively. We say that $Y$ is {\it
decreasing} if $Y=(Y]$ and {\it increasing} if $Y=[Y)$. 


A triple $\langle X; \leq, \tau\rangle $ is a {\it totally order-disconnected topological space} if $\langle X, \leq\rangle$ is a poset, $\tau$ is a topology on $X$, and for $x$, $y \in X$, if $x \not\leq y$, then there exists a clopen  increasing set $U \subseteq X$ such that $x \in U$ and $y \notin U$. A {\it compact} totally order-disconnected space is called a {\it Priestley space}.

In \cite{Priestley70} it is proved that the category of bounded distributive lattices and homomorphisms is dually equivalent to the category of Priestley spaces and order-preserving continuous functions. More precisely, Priestley defined contravariant functors $\mathbf{I}$ and $\mathbf{X}$ as follows. If $\mathbf{X}$ is a Priestley space, then $\mathbf{I(X)}$ is the lattice of clopen increasing subsets of $\mathbf{X}$ (we denote by $\D(\mathbf{X})$ the universe of this lattice), and for each morphism $f\colon \mathbf{X}\to \mathbf{X'}$, $\mathbf{I}(f)$ is defined by $\mathbf{I}(f)(U) := f^{-1}(U)$ for each $U \in \D(\mathbf{X'})$. If $\mathbf{L}$ is a bounded distributive lattice, we denote by $\X(\mathbf{L})$ the set of prime filters of $\mathbf{L}$. Then $\mathbf{X(L)}$ is the Priestley space obtained by ordering $\X(\mathbf{L})$ by set inclusion and considering the topology generated by the sets of the form $\sigma(a) := \{P\in \X(\mathbf{L}): a\in P\}$ and $\X(\mathbf{L})\setminus
\sigma(a)$ for each $a\in L$. If $h\colon \mathbf{L}\to \mathbf{L'}$ is a homomorphism, then $\mathbf{X}(h)$ is defined by $\mathbf{X}(h)(P) = h^{-1}(P)$ for each $P \in \X(\mathbf{L'})$. It follows that $\sigma\colon \mathbf{L}\to \mathbf{I(X(L))}$ is a lattice isomorphism, and that the mapping $\varepsilon\colon \mathbf{X}\to \mathbf{X(I(X))}$ defined by the formula $\varepsilon(x) := \{U\in \D(\mathbf{X}): x\in U\}$ is a homeomorphism and an order isomorphism.  We refer the reader to \cite{DP02-libro} for basic properties of Priestley spaces and Priestley duality.

Next we define the kind of Priestley spaces that will prove to be the duals of monadic Gödel algebras.

\begin{dfn} An \emph{MG-space} $\mathbf{X}=\langle X; \leq, \tau, E\rangle$ is a Priestley space $\langle X; \leq, \tau\rangle$ enriched with an equivalence relation $E$ defined on $X$ that satisfies the following conditions:
\begin{enumerate}[(c1)]
\item $(Y]$ is clopen for every clopen $Y\subseteq X$.
\item $[x)$ is a chain for every $x\in X$.
\item The relation $E$ satisfies:
\begin{enumerate}[(c3a)]
\item $\exists U \in \D(\mathbf{X})$ for each $U \in \D(\mathbf{X})$, where $\exists U$ is the union of all the equivalence classes that contain an element of $U$,
\item $\forall U \in \D(\mathbf{X})$ for each $U \in \D(\mathbf{X})$, where $\forall U$ is the union of all the equivalence classes that are contained in $U$,
\item the equivalence classes determined by $E$ are closed in $\mathbf{X}$.
\end{enumerate}
\end{enumerate}
\end{dfn}

We denote by $\bar x$ the equivalence class of an element $x$ in a Priestley space enriched with an equivalence relation $E$.

Let us recall that a Priestley space fulfilling condition (c1) is a Heyting space, that is, the Priestley space associated to a Heyting algebra \cite{Esakia-libro,Priestley84}. If, in addition, the space satisfies condition (c2), then it is possible to prove that the category of these spaces and its morphisms is dually equivalent to the category of Gödel algebras and homomorphisms \cite{MonteiroPortugaliaeMathe}. 

Observe also that if a Priestley space is enriched with an equivalence relation $E$ which satisfies conditions (c3a) and (c3c), then the space is a Q-space \cite{Cignoli91}. Cignoli proved that the category of Q-spaces is dually equivalent to the category of bounded distributive lattices with a quantifier $\exists$ (denoted by $\nabla$ in his paper) that satisfies  the identities $\exists 0 \approx 0$, $x \wedge \exists x \approx x$, $\exists(x \wedge \exists y) \approx \exists x \wedge \exists y$, $\exists(x \vee y) \approx \exists x \vee \exists y$, all of them valid in monadic Gödel algebras.

Let $\mathbf{L}$ be a bounded distributive lattice. A unary operation $\forall\colon \mathbf{L} \to \mathbf{L}$ is called an \emph{interior operator} if it satisfies the following identities $\forall 1 \approx 1$, $x \wedge \forall x \approx \forall x$, $\forall \forall x \approx \forall x$ and $\forall (x \wedge y) = \forall x \wedge \forall y$ (see \cite{Bloktesis}). 

\begin{thm}\label{thmref:sigma_forall} Let $\mathbf{L}$ be a bounded distributive lattice with an interior operator $\forall$ and put $E := \{(P,Q) \in \X(\mathbf{L})^{2} : P \cap \forall L= Q \cap \forall L \}$, an equivalence relation over $\X(\mathbf{L})$. The following are equivalent properties:
 \begin{enumerate}[\rm (i)]
	\item Given $P, Q\in \X(\mathbf{L})$ such that $P\cap \forall L \subseteq Q \cap \forall L$, there exists
	$R\in \X(\mathbf{L})$ such that $(P,R)\in E$ and $R\subseteq Q$.
	\item If $P\in \X(\mathbf{L})$, $a\in P$ and $\overline{P}\subseteq \sigma(a)$, then $\forall a \in P$.
	\item For each $a\in L$, $\forall \sigma (a)= \sigma (\forall a)$, where $\forall \sigma(a)$ is the union of the equivalence classes contained in $\sigma(a)$.
	\item For all $a, b\in L$, $\forall(\forall a \vee b)= \forall a\vee \forall b$.
\end{enumerate}
\end{thm}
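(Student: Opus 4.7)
The plan is to prove equivalence via the cycle $\mathrm{(i)}\Rightarrow\mathrm{(ii)}\Rightarrow\mathrm{(iii)}\Rightarrow\mathrm{(iv)}\Rightarrow\mathrm{(i)}$; throughout, I use that $\forall$ is monotone (being an interior operator) and acts as the identity on $\forall L$. For $\mathrm{(i)}\Rightarrow\mathrm{(ii)}$, assume $a\in P$, $\overline{P}\subseteq \sigma(a)$, but $\forall a\notin P$. Monotonicity implies the filter of $\mathbf{L}$ generated by $P\cap \forall L$ avoids $a$ (otherwise some $c\in P\cap \forall L$ with $c\leq a$ gives $c=\forall c\leq \forall a$, forcing $\forall a\in P$). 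Extend to a prime filter $Q\not\ni a$; then $P\cap \forall L\subseteq Q\cap \forall L$, so (i) produces $R\in \overline{P}$ with $R\subseteq Q$, contradicting $a\in R\subseteq Q$. For $\mathrm{(ii)}\Rightarrow\mathrm{(iii)}$, observe that elements of a single $E$-class have the same intersection with $\forall L$, so $\forall a\in P$ forces $\forall a\in R$ (hence $a\in R$) for every $R\in\overline{P}$, giving $\sigma(\forall a)\subseteq \forall\sigma(a)$; the reverse inclusion is (ii) applied to any $P\in\forall\sigma(a)$.

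For $\mathrm{(iii)}\Rightarrow\mathrm{(iv)}$, I apply $\sigma$ to both sides; both translate to subsets of $\X(\mathbf{L})$, and the identity reduces to
\[
\forall\bigl(\forall\sigma(a)\cup \sigma(b)\bigr)=\forall\sigma(a)\cup \forall\sigma(b).
\]
This holds because $\forall\sigma(a)$ is already a union of $E$-classes: an $E$-class contained in the left-hand side either meets $\forall\sigma(a)$ (and is then contained in it) or lies inside $\sigma(b)$.

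For $\mathrm{(iv)}\Rightarrow\mathrm{(i)}$, given $P,Q$ with $P\cap \forall L\subseteq Q\cap \forall L$, set $F:=P\cap \forall L$ and let $J$ be the ideal of $\mathbf{L}$ generated by $(\forall L\setminus P)\cup (L\setminus Q)$. Any prime filter $R$ separating $F$ from $J$ automatically satisfies $R\subseteq Q$ and $R\cap \forall L=P\cap \forall L$, so the crux is to verify $F\cap J=\emptyset$. Suppose $c\in F\cap J$; then $c\leq c^*\vee b^*$ with $c^*$ a (possibly empty) join of elements of $\forall L\setminus P$ and $b^*$ a (possibly empty) join of elements of $L\setminus Q$. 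From (iv), $\forall L$ is closed under binary joins (since $\forall(a\vee b)=\forall(\forall a\vee b)=\forall a\vee \forall b=a\vee b$ whenever $a,b\in \forall L$), hence $c^*\in \forall L$; applying $\forall$ to $c\leq c^*\vee b^*$ and invoking (iv) once more yields $c\leq c^*\vee \forall b^*$. Since $c\in P$ and $c^*\notin P$ (by primality), we get $\forall b^*\in P\cap \forall L\subseteq Q$, so $b^*\in Q$ -- a contradiction. The main obstacle is this last implication: one must design the filter/ideal pair so that a separating prime filter meets both conditions simultaneously, and the pivotal tool -- extracted from (iv) -- is the closure of $\forall L$ under finite joins.
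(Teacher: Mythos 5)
Your proposal is correct and follows essentially the same route as the paper: the same cycle $\mathrm{(i)}\Rightarrow\mathrm{(ii)}\Rightarrow\mathrm{(iii)}\Rightarrow\mathrm{(iv)}\Rightarrow\mathrm{(i)}$, the same filter/ideal separations in the first and last implications, and the same set-theoretic identity $\forall(\forall\sigma(a)\cup\sigma(b))=\forall\sigma(a)\cup\forall\sigma(b)$ at the heart of $\mathrm{(iii)}\Rightarrow\mathrm{(iv)}$. The only omissions (the reverse inclusion in that identity, and the explicit verification that the separating prime filter in $\mathrm{(iv)}\Rightarrow\mathrm{(i)}$ is $E$-related to $P$) are routine and correctly asserted.
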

\begin{proof}

(i) implies (ii). Let $P\in \X(\mathbf{L})$ such that $a\in P$. We will prove that, if $\forall a\notin P$, then there exists $R\in \X(\mathbf{L})$ such that $(P,R)\in E$ and $a\notin R$. For that, let us consider the filter $F$ of $\mathbf{L}$ generated by $P\cap \forall L$ and the principal ideal $J$ of $\mathbf{L}$ generated by $a$ (note that $P \cap \forall L$ is closed under $\wedge$). Then $F \cap J=\emptyset$. Indeed, if $c \in F \cap J$, there is $k \in P \cap \forall L$ such that $k\leq c\leq a$. Then $k=\forall k \leq \forall a$ and this contradicts that $\forall a\notin P$.	By the Prime Filter Theorem, there exists a prime filter $Q$ such that $F \subseteq Q$ and $Q \cap J = \emptyset$. We have that $Q\in\X(\mathbf{L})$ and $P \cap \forall L \subseteq F \cap \forall L \subseteq Q \cap \forall L$. Now by (i) we obtain $R$ in $\X(\mathbf{L})$ such that $(P,R)\in E$ and $R\subseteq Q$. So, $a\notin R$.
	
(ii) implies (iii). Let $P\in \forall \sigma (a)$. By the definition of $\forall$, $a\in P$ and $\overline{P}\subseteq \sigma (a)$. Considering (ii) we have that $\forall a\in P$, and then $P\in \sigma (\forall a)$. Suppose now that $P\in \sigma (\forall a)$. Then $P\in \sigma(a)$. Let	$Q\in \overline{P}$, that is, $P \cap \forall L= Q \cap \forall L$. Since $\forall a\in P \cap \forall L$, we have that $\forall a \in Q$ and then $a\in Q$. Therefore $\overline{P}\subseteq \sigma (a)$, which implies	that $P\in \forall \sigma (a)$.

(iii) implies (iv).  From (iii) and since $\sigma$ is a lattice isomorphism, we have that $\sigma (\forall(\forall a \vee b))= \forall \sigma (\forall a \vee b) = \forall( \forall \sigma(a) \cup \sigma(b))$. Let us see that $\forall( \forall \sigma(a) \cup \sigma(b))= \forall \sigma(a)\cup \forall \sigma (b)$. Indeed, if $P \in \forall( \forall \sigma(a) \cup \sigma(b))$, then $\overline{P} \subseteq \forall \sigma(a) \cup \sigma(b)$. If $\overline{P} \cap \forall \sigma(a)\neq \emptyset$, then $\overline{P} \subseteq \forall \sigma(a)$. On the other hand, if $\overline{P} \cap \forall \sigma(a)= \emptyset$ then $\overline{P} \subseteq \sigma(b)$. In consequence, $P\in  \forall \sigma(a)\cup \forall \sigma(b)$. Let $P\in  \forall \sigma(a)\cup \forall \sigma(b)$. Then $P\in  \forall \sigma(a)$ or $P\in \forall \sigma(b)$. So, $\overline{P} \subseteq \sigma(a)$ or $ \overline{P} \subseteq \sigma(b)$. Clearly, if $\overline{P} \subseteq \sigma(a)$, then $\overline{P} \subseteq \forall \sigma(a)$. Thus, $\overline{P}	\subseteq \forall \sigma(a) \cup \sigma(b)$ and, from the definition of $\forall$, we have that $ P\in \forall( \forall \sigma(a) \cup \sigma(b))$. Once again, from (iii) and since $\sigma$ is a lattice isomorphism, we obtain $\forall \sigma(a)\cup \forall \sigma (b) = \sigma(\forall a\vee \forall b)$. So, $\sigma (\forall(\forall a \vee b))= \sigma(\forall a\vee \forall b)$.

(iv) implies (i). Let $P, Q\in \X(\mathbf{L})$ such that $P\cap \forall L \subseteq Q\cap \forall L$. Consider the filter $F$ generated by $P\cap \forall L$ and the ideal $J$ generated by $(L\setminus Q)\cup (\forall L\setminus P)$. Let us see that $F\cap J =\emptyset$. Indeed, if $c\in F\cap J$, then there exist $a\in L\setminus Q$ and $\forall b\notin P$ such that $c \leq a \vee \forall b$ (note that $\forall L \setminus P$ is closed under $\vee$ by condition (iv)). Also, there exists $d\in P\cap \forall L$ such that $d\leq c$. By (iv), we have that $d=\forall d\leq \forall c\leq \forall (a\vee \forall b) = \forall a\vee \forall b$. Since $d\in P\cap \forall L$, we have that $\forall a\vee \forall b\in P\cap \forall L$. From $a\notin Q$, we obtain that $\forall a\notin Q$. But $P\cap \forall L \subseteq Q\cap \forall L$, so $\forall a\notin P$ and then $\forall b\in P$, which is a contradiction. By the Prime Filter Theorem  there exists a	prime filter $R$ such that $F \subseteq R$ and $R \cap J = \emptyset$. Since $L\setminus Q\subseteq J \subseteq L \setminus R$, we have that $R\subseteq Q$. Let us prove that $(P,R)\in E$. If $c\in P\cap \forall L$, then $c\in F$ and so $c \in R$. Thus, $c\in R\cap \forall L$. Conversely, let $c\in R\cap \forall L$. If $ c\notin P$, then $c\in \forall L\setminus P$ and $c\in J\subseteq L \setminus R$. So, $c\notin R$. Then, $c\in P \cap \forall L $.
\end{proof}

\begin{remark}
Cignoli proved in \cite[Theorem 2.2]{Cignoli91} a theorem analogous to \thmref{thmref:sigma_forall} for bounded distributive lattices with an additive closure operator. We included the proof of \thmref{thmref:sigma_forall} because neither of the results follows directly from the other one, since these algebras are not symmetric and the properties of $\forall$ are not mere consequences of the corresponding properties of $\exists$.
\end{remark}

Let $\mathbf{A}$ be a monadic Gödel algebra and let us consider the enriched Priestley space \linebreak $\langle \X(\mathbf{A}); \subseteq, \tau, E\rangle$ where $E=\{ (P,Q) \in \X(\mathbf{A})^{2} : P \cap
\exists A= Q \cap \exists A \}$. We already know that this space satisfies (c1), (c2), (c3a) and (c3c) (\cite{Cignoli91}, \cite{MonteiroPortugaliaeMathe}). Moreover, from \thmref{thmref:sigma_forall} and the fact that monadic Gödel algebras satisfy the identity $\forall(\forall x \vee y) \approx \forall x \vee \forall y$, we have that (c3b) is also satisfied. So, the next result follows.

\begin{prop} Let $\mathbf{A}\in \mathbb{MG}$. Then $\langle \X(\mathbf{A}); \subseteq, \tau, E\rangle$ is an MG-space.
\end{prop}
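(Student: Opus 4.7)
The plan is to verify each of the conditions (c1), (c2), (c3a), (c3b), (c3c) separately, with most of the work being citations to previously established facts and one computation to check the hypothesis of \thmref{thmref:sigma_forall}.

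Conditions (c1) and (c2) have nothing to do with the quantifiers and depend only on the reduct of $\mathbf{A}$ as a Gödel algebra. The Heyting structure on $\mathbf{A}$ gives (c1): the Priestley dual of any Heyting algebra is a Heyting space, i.e.\ $(Y]$ is clopen for every clopen $Y$. The prelinearity identity $(x \implica y) \vee (y \implica x) \approx 1$ translates on the dual side to the property that $[P)$ is a chain in $\X(\mathbf{A})$ for every prime filter $P$, which is precisely (c2). I would just cite \cite{Esakia-libro,MonteiroPortugaliaeMathe} for these two.

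Next, $\exists$ is a unary operation on the underlying distributive lattice of $\mathbf{A}$ satisfying $\exists 0 \approx 0$, $x \wedge \exists x \approx x$, $\exists(x \wedge \exists y) \approx \exists x \wedge \exists y$, $\exists(x \vee y) \approx \exists x \vee \exists y$, so it is a quantifier in the sense of Cignoli. Hence Cignoli's duality \cite{Cignoli91} for bounded distributive lattices with an additive closure operator immediately gives both (c3a) (that $\exists U$ is clopen increasing whenever $U$ is) and (c3c) (that the $E$-classes are closed), so no new work is required.

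The only condition that genuinely needs the new \thmref{thmref:sigma_forall} is (c3b). For this I would check that $\forall$ is an interior operator on the lattice reduct and that hypothesis (iv) of that theorem holds, i.e.\ $\forall(\forall a \vee b) = \forall a \vee \forall b$ for all $a, b \in A$. The interior-operator identities ($\forall 1 = 1$, $\forall x \leq x$, $\forall \forall x = \forall x$, $\forall(x \wedge y) = \forall x \wedge \forall y$) are listed in the basic-properties lemma. For (iv), I would observe that $\forall a \in \exists A$, so $\exists(\forall a) = \forall a$, and then apply identity (M\ref{M4}) with $x = \forall a$:
\[
\forall(\forall a \vee b) = \forall(\exists(\forall a) \vee b) = \exists(\forall a) \vee \forall b = \forall a \vee \forall b.
\]
Then \thmref{thmref:sigma_forall}(iii) gives $\forall \sigma(a) = \sigma(\forall a)$ for every $a$, and since the clopen increasing sets of $\X(\mathbf{A})$ are exactly the $\sigma(a)$, this says that $\forall U \in \D(\X(\mathbf{A}))$ for every clopen increasing $U$, which is (c3b). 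I do not foresee a serious obstacle; the main content of the statement was already packaged into \thmref{thmref:sigma_forall}, and the remaining work is the short verification of its hypothesis from the monadic identities.
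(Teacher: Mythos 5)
Your proposal is correct and follows essentially the same route as the paper: conditions (c1), (c2), (c3a), (c3c) are delegated to the known dualities for Gödel algebras and Q-distributive lattices, and (c3b) is obtained from Theorem~\ref{thmref:sigma_forall} via its condition (iv), which the paper likewise justifies by the identity $\forall(\forall x \vee y) \approx \forall x \vee \forall y$ (your derivation of it from (M\ref{M4}) and $\exists\forall a = \forall a$ is a valid way to check it, and one could equally cite item (5) of the arithmetical lemma with $c = \forall a$). The only detail worth making explicit is that the relation $E$ in the proposition is defined via $\exists A$ while Theorem~\ref{thmref:sigma_forall} uses $\forall L$; these agree because $\exists A = \forall A$.
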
 

Let $\mathbf{X}$ be an MG-space. Let us consider the lattice $\mathbf{I(X)}$, where we define $U\implica V= X\setminus(U\setminus V]$, for $U, V\in \D(\mathbf{X})$, and, $\exists$ and $\forall $ as in (c3a) and (c3b). 

\begin{lem} \label{propiedadesDX}
The algebra $\langle \D(\mathbf{X});\cap, \cup, \implica, \emptyset, X,  \exists,\forall\rangle $ satisfies:
\begin{itemize}
\item $\langle \D(\mathbf{X});\cap, \cup, \implica, \emptyset, X \rangle$ is a Gödel algebra; and
\item the following identities:
\begin{multicols}{3}
\begin{enumerate}[$(1)$]
\item\label{MM10} $\forall 1 \approx 1$,
\item\label{MM13} $\exists 0 \approx 0$,

\item\label{MM1} $\forall x \implica x \approx 1$,
\item\label{MM7} $x \implica \exists x \approx 1$,

\item\label{MM6} $\forall \exists x \approx \exists x$,
\item\label{MM11} $\exists \forall x \approx \forall x$,

\item\label{MM37} $\forall(x \wedge y) \approx \forall x \wedge \forall y$,
\item\label{MM20} $\exists(x \vee y) = \exists x \vee \exists y$,

\item\label{MM32} $\exists(x \wedge \exists y) \approx \exists x \wedge \exists y$,
\item\label{MM4} $\forall(\exists x \vee y) = \exists x \vee \forall y$.
\end{enumerate} 
\end{multicols}
\end{itemize}
\end{lem}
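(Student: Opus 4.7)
The plan is to split the lemma into two essentially independent parts. For the claim that $\langle \D(\mathbf{X}), \cap, \cup, \implica, \emptyset, X\rangle$ is a Gödel algebra, I would appeal to the Priestley-style duality for Heyting and Gödel algebras recalled just before the definition of MG-space. Condition (c1) is precisely the property making a Priestley space a Heyting (Esakia) space, and it guarantees that the formula $U \implica V := X \setminus (U \setminus V]$ defines a clopen increasing set and produces the relative pseudocomplement on $\D(\mathbf{X})$. Condition (c2), that $[x)$ is a chain for every $x$, is the standard dual of prelinearity $(U \implica V) \cup (V \implica U) = X$. Together these make $\D(\mathbf{X})$ a Gödel algebra.

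The key observation for the ten identities is that, directly from (c3a) and (c3b), both $\exists U$ and $\forall U$ are always $E$-saturated (i.e.\ unions of $E$-classes), and for any $W \subseteq X$ one has $W$ is $E$-saturated iff $\forall W = W$ iff $\exists W = W$. This alone yields identities (5) and (6), and reduces the rest to routine set-theoretic computations. Identities (1)--(4) are immediate: $\forall X = X$ and $\exists \emptyset = \emptyset$ are trivial, $\forall U \subseteq U$ because $x \in \forall U$ implies $x \in \bar x \subseteq U$, and $U \subseteq \exists U$ because $x \in U$ implies $\bar x \cap U \ne \emptyset$ and so $\bar x \subseteq \exists U$. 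Identities (7) and (8) follow from the equivalences $\bar x \subseteq U \cap V$ iff $\bar x \subseteq U$ and $\bar x \subseteq V$, and $\bar x \cap (U \cup V) \ne \emptyset$ iff $\bar x \cap U \ne \emptyset$ or $\bar x \cap V \ne \emptyset$.

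The only identities needing slightly more care are (9) $\exists(U \cap \exists V) = \exists U \cap \exists V$ and (10) $\forall(\exists U \cup V) = \exists U \cup \forall V$. For (9), monotonicity of $\exists$ combined with $\exists \exists V = \exists V$ (from the saturation principle) gives $\subseteq$; for $\supseteq$, given $x \in \exists U \cap \exists V$, choose $y \in \bar x \cap U$; since $y \in \bar x \subseteq \exists V$ by $E$-saturation of $\exists V$, we obtain $y \in U \cap \exists V$, hence $\bar x$ meets $U \cap \exists V$ and $x \in \exists(U \cap \exists V)$. For (10), the $\supseteq$ direction is immediate; for $\subseteq$, if $\bar x \subseteq \exists U \cup V$ and $\bar x \not\subseteq \exists U$, then $E$-saturation of $\exists U$ forces $\bar x \cap \exists U = \emptyset$, whence $\bar x \subseteq V$ and $x \in \forall V$. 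I do not anticipate a substantive obstacle: once the saturation principle for $\exists U$ and $\forall U$ is in hand, all ten identities reduce to straightforward verifications, and the Gödel structure is ready-made from the classical Esakia/Monteiro duality combined with (c1) and (c2).
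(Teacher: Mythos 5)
Your proof is correct and follows essentially the same approach as the paper: both appeal to the known Priestley/Esakia-type dualities for the Gödel-algebra structure and then verify the quantifier identities by direct set-theoretic computation using that $\exists U$ and $\forall U$ are unions of $E$-classes. The only cosmetic difference is that the paper delegates identities (2), (4), (8) and (9) to Cignoli's results on Q-spaces and only writes out (7) and (10), whereas you verify all ten by hand.
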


\begin{proof}
The fact that $\langle \D(\mathbf{X});\cap, \cup, \implica, \emptyset, X \rangle$ is a Gödel algebra follows immediately from the known duality for Gödel algebras (see \cite{DP02-libro, MonteiroPortugaliaeMathe}).  Clearly from the definitions of $\exists $ and $\forall$ we have (\ref{MM10}), (\ref{MM1}), (\ref{MM6}) and (\ref{MM11}). From \cite{Cignoli91}, we have (\ref{MM13}), (\ref{MM7}), (\ref{MM20}) and (\ref{MM32}).
 It only remains to prove (\ref{MM37}) and (\ref{MM4}). Indeed, clearly $\forall (U\cap V)\subseteq \forall U \cap\forall V$. If $x\in \forall U \cap\forall V $, then $\overline{x}\subseteq U$ and $\overline{x}\subseteq V $. So, $\overline{x}\subseteq U\cap V$ and then $x\in \forall (U\cap V)$. 
To see (\ref{MM4}), let $x\in \exists U \cup \forall V$. If $x\in \exists U$ then $\overline{x}\subseteq \exists U$. So, $\overline{x}\subseteq \exists U\cup V$ which means that $x\in \forall (\exists U\cup V)$. On the other hand, if $x\in \forall V$ then $\overline{x}\subseteq V$. So, $\overline{x}\subseteq \exists U\cup V$ and then $x\in \forall (\exists U\cup V)$. For the other inclusion, let $x\in \forall (\exists U\cup V)$. Then $\overline{x}\subseteq \exists U\cup V$. If $\overline{x}\subseteq \exists U$ then $\overline{x}\subseteq \exists U\cup \forall V$. If $\overline{x}\cap \exists U=\emptyset$ then $\overline{x}\subseteq V$. So, $x\in \forall V$.
\end{proof}

From \lemref{propiedadesDX} we have that $\mathbf{I(X)}$ is a monadic Heyting algebra (as defined by Bezhanishvili in \cite{bezhanishvili98}) that also satisfies the prelinearity identity and (M\ref{M4}). From \cite[Theorem 5.9]{CCDVR17}, we obtain the following.

\begin{prop} If $\langle X; \leq, \tau, E\rangle$ is an MG-space then $ \mathbf{I(X)}$ is a monadic Gödel algebra.
\end{prop}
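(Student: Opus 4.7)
The plan is to invoke the already-established Lemma~\ref{propiedadesDX} together with the characterization of monadic Gödel algebras from \cite{CCDVR17} as prelinear monadic Heyting algebras satisfying (M\ref{M4}), so the bulk of the work has actually been done in the preceding lemma. What remains is mostly bookkeeping: checking that the list of identities in \lemref{propiedadesDX} is enough to certify that $\langle \mathbf{I(X)}, \exists, \forall\rangle$ is a monadic Heyting algebra in the sense of Bezhanishvili \cite{bezhanishvili98}.

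First I would recall from \lemref{propiedadesDX} that $\langle \D(\mathbf{X}); \cap, \cup, \implica, \emptyset, X\rangle$ is a Gödel algebra. Since every Gödel algebra is a Heyting algebra satisfying the prelinearity identity $(x \implica y) \vee (y \implica x) \approx 1$, the underlying Heyting structure is in place. Next I would verify Bezhanishvili's monadic axioms one by one on top of this Heyting structure: that $\forall$ is an interior operator (i.e.\ $\forall 1 \approx 1$, $\forall x \implica x \approx 1$, $\forall(x \wedge y) \approx \forall x \wedge \forall y$, and $\forall \forall x \approx \forall x$) and that $\exists$ is a closure operator (i.e.\ $\exists 0 \approx 0$, $x \implica \exists x \approx 1$, $\exists(x \vee y) \approx \exists x \vee \exists y$, and $\exists \exists x \approx \exists x$), plus the interaction axioms $\forall \exists x \approx \exists x$, $\exists \forall x \approx \forall x$, and $\exists(x \wedge \exists y) \approx \exists x \wedge \exists y$. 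The idempotence $\forall \forall x \approx \forall x$ and $\exists \exists x \approx \exists x$ that is not literally listed in \lemref{propiedadesDX} follows from items (\ref{MM6}) and (\ref{MM11}) together with the observation that for $U \in \D(\mathbf{X})$ the sets $\forall U$ and $\exists U$ are already saturated (unions of $E$-classes), so applying $\forall$ or $\exists$ again leaves them unchanged; alternatively one just reads the identities off the set-theoretic definitions.

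Once the monadic Heyting structure is confirmed, I would check the two distinguishing identities: prelinearity is automatic (we are in a Gödel algebra), and identity (M\ref{M4}), $\forall(\exists x \vee y) \approx \exists x \vee \forall y$, is exactly item (\ref{MM4}) of \lemref{propiedadesDX}. By \cite[Theorem 5.9]{CCDVR17}, a monadic Heyting algebra satisfying prelinearity and (M\ref{M4}) is a monadic Gödel algebra, which yields the desired conclusion.

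The main obstacle, such as it is, lies in matching the specific identities listed in \lemref{propiedadesDX} to whichever equivalent presentation of monadic Heyting algebras is used to invoke \cite[Theorem 5.9]{CCDVR17}; in particular one must be careful that $\forall$ and $\exists$ are genuinely an interior/closure pair (not just that their ranges coincide in the sense of \lemref{LEMA: props basicas}(1)). Since the proof is essentially a citation, I would keep it short: assemble the identities from \lemref{propiedadesDX}, remark that $\D(\mathbf{X})$ is thereby a prelinear monadic Heyting algebra satisfying (M\ref{M4}), and appeal to \cite[Theorem 5.9]{CCDVR17}.
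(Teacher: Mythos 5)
Your proposal is correct and follows essentially the same route as the paper: the text preceding the proposition simply observes that \lemref{propiedadesDX} makes $\mathbf{I(X)}$ a monadic Heyting algebra in the sense of \cite{bezhanishvili98} satisfying prelinearity and (M\ref{M4}), and then cites \cite[Theorem 5.9]{CCDVR17}. Your extra remark that idempotence of $\forall$ and $\exists$ follows because $\forall U$ and $\exists U$ are already unions of $E$-classes is a sound way to fill in the one identity not literally listed in the lemma.
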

	

Having established the correspondence between objects, we turn now to morphisms.
 
\begin{dfn} Let $\mathbf{X}$ and $ \mathbf{X'}$ be MG-spaces. An \emph{MG-morphism} $f$ from $\mathbf{X}$ to $\mathbf{X'}$ is a continuous order-preserving map $f\colon \mathbf{X}\to \mathbf{X'}$ such that $f([x))= [f(x))$, $\exists f^{-1}(U')= f^{-1}(\exists U')$ and $\forall f^{-1}(U')= f^{-1}(\forall U')$ for every $U'\in \D(\mathbf{X'})$.
\end{dfn}

From the known dualities for Gödel algebras and Q-distributive lattices we have that if $\mathbf{X},\mathbf{X}'$ are MG-spaces and $f\colon \mathbf{X}\to \mathbf{X'}$ is an MG-morphism, then $\mathbf{I}(f)\colon \mathbf{I}(\mathbf{X'}) \to \mathbf{I}(\mathbf{X})$ is a homomorphism. Conversely, if $\mathbf{A},\mathbf{A}'$ are monadic Gödel algebras and $h\colon  \mathbf{A}\to \mathbf{A'}$ is a homomorphism, consider $\mathbf{X}(h)\colon  \mathbf{X}(\mathbf{A'})\to \mathbf{X}(\mathbf{A})$. Again most of the conditions to check follow from the dualities for Gödel algebras and Q-distributive lattices; the only condition that remains to be proven is that $\forall \mathbf{X}(h)^{-1} (\sigma(a)) = \mathbf{X}(h)^{-1}(\forall \sigma(a))$ for every $a \in A$. Indeed, using item (iii) in \thmref{thmref:sigma_forall}
$$ \forall \mathbf{X}(h)^{-1}(\sigma(a)) = \forall \sigma(h(a))= \sigma(\forall h(a))= \sigma(h(\forall a))=\mathbf{X}(h)^{-1}(\sigma(\forall a)) = \mathbf{X}(h)^{-1}(\forall\sigma(a)).$$ 
Thus, $\mathbf{X}(h)$ is an MG-morphism.

Clearly, $\sigma\colon  \mathbf{A} \to \mathbf{I(X(A))}$ is an isomorphism. The mapping $\varepsilon\colon \mathbf{X}\to \mathbf{X(I(X))}$ is a homeomorphism, an order isomorphism and satisfies the condition: $$(x,y)\in E \Leftrightarrow
(\varepsilon(x),\varepsilon(y))\in \{(P, Q)\in \X(\mathbf{I}(\mathbf{X}))\times \X(\mathbf{I}(\mathbf{X})):
P\cap \exists \D(\mathbf{X}) = Q\cap \exists \D(\mathbf{X})\}$$ \cite[Theorem 2.6]{Cignoli91}. Finally, the naturality of $\sigma$ and $\varepsilon$ follows immediately from the original Priestley duality. So, we have proved the following theorem.

\begin{thm}
The categories of monadic Gödel algebras and MG-spaces are dually equivalent. 
\end{thm}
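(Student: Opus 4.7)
The plan is to assemble the pieces already established in the preceding paragraphs into the standard categorical equivalence argument, following the template of Priestley duality and its extension to Q-distributive lattices (Cignoli) and to Gödel algebras (Monteiro), adding in the extra verifications required by the operator $\forall$.

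First I would verify that $\mathbf{I}$ and $\mathbf{X}$ are genuinely contravariant functors between the two categories. That objects are sent to objects is precisely the content of the two previous propositions; that morphisms are sent to morphisms is the content of the paragraph just before the theorem, where the $\forall$-commutation condition for $\mathbf{X}(h)$ was checked using item (iii) of Theorem~\ref{thmref:sigma_forall} and the dual check for $\mathbf{I}(f)$ reduces to the already-known facts for the underlying Gödel and Q-distributive structures. Functoriality (preservation of identities and composition) follows immediately from the fact that on the underlying sets these are the same assignments $U\mapsto f^{-1}(U)$ and $P\mapsto h^{-1}(P)$ used in Priestley duality.

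Next I would argue that the unit and counit of the adjunction, namely $\sigma\colon \mathbf{A}\to \mathbf{I(X(A))}$ and $\varepsilon\colon \mathbf{X}\to \mathbf{X(I(X))}$, are isomorphisms in the respective categories. For $\sigma$, the underlying map is already a Gödel algebra isomorphism by the Gödel-algebra duality, so only compatibility with $\exists$ and $\forall$ must be checked; $\sigma(\exists a)=\exists\sigma(a)$ is the content of Cignoli's result and $\sigma(\forall a)=\forall\sigma(a)$ is exactly item (iii) of Theorem~\ref{thmref:sigma_forall}, applied to $\mathbf{A}$ (whose interior operator $\forall$ satisfies (iv) because monadic Gödel algebras satisfy (M\ref{axiomac}$'$)-type identity $\forall(\forall x\vee y)\approx \forall x\vee \forall y$ established in Lemma~\ref{propiedadesDX}(\ref{MM4})). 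For $\varepsilon$, the map is a homeomorphism and order isomorphism by Priestley duality, and the $E$-compatibility $(x,y)\in E\Leftrightarrow \varepsilon(x)\cap \exists \D(\mathbf{X})=\varepsilon(y)\cap \exists \D(\mathbf{X})$ was quoted from \cite[Theorem 2.6]{Cignoli91}; it remains only to note that this in particular guarantees the equivalence classes of $\mathbf{X(I(X))}$ are carried to those of $\mathbf{X}$ in a bijective way, so $\varepsilon$ is an MG-morphism whose inverse is also one.

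Finally, naturality of $\sigma$ and $\varepsilon$ in $\mathbf{A}$ and $\mathbf{X}$ respectively is inherited directly from the original Priestley duality, since the naturality squares involved are exactly those for the underlying bounded distributive lattices and Priestley spaces, and the added modal structure plays no further role in the commutativity of these diagrams. Combining these three ingredients yields the dual equivalence of categories. The only step that is not a direct citation is the verification of $\sigma(\forall a)=\forall\sigma(a)$, and I expect this to be the main (though modest) obstacle; it is, however, precisely what Theorem~\ref{thmref:sigma_forall} was designed to deliver, so the work has already been done.
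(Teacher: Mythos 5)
Your proposal is correct and follows essentially the same route as the paper: objects are handled by the two propositions, morphisms by the computation with item (iii) of Theorem~\ref{thmref:sigma_forall}, the natural isomorphisms $\sigma$ and $\varepsilon$ by combining the Gödel-algebra duality, Cignoli's Q-space duality, and Theorem~\ref{thmref:sigma_forall}, and naturality is inherited from Priestley duality. The only slip is a citation: the identity $\forall(\forall x\vee y)\approx\forall x\vee\forall y$ needed to invoke condition (iv) of Theorem~\ref{thmref:sigma_forall} for a monadic Gödel algebra $\mathbf{A}$ is a known identity of monadic (Heyting) Gödel algebras, as the paper states just before its object-level proposition, and is not Lemma~\ref{propiedadesDX}(\ref{MM4}), which concerns the algebra $\mathbf{I(X)}$ and records a different identity.
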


\subsection{Some applications of the duality}

Recall that if $\mathbf{A} $ is a Heyting algebra and $Y$ is a closed increasing subset of $\X(\mathbf{A})$, then $\theta (Y) := \{(a,b)\in A\times A:\sigma(a)\cap Y = \sigma(b)\cap Y\}$ is a congruence on $\mathbf{A}$ (see \cite{Esakia-libro}). Moreover, the correspondence $Y\mapsto \theta(Y)$ establishes an anti-isomorphism from the lattice of closed increasing sets of $\X(\mathbf{A})$ onto the congruence lattice of $\mathbf{A}$. This properties are clearly inherited by Gödel algebras. Next we derive a similar result for monadic Gödel algebras.

Let $\mathbf{X}$ be an MG-space. A subset $Y\subseteq X$ is called \emph{saturated} if $\exists Y=Y$. Clearly, if $Y$ is saturated, then $\forall Y = Y$.

\begin{thm}
Let $\mathbf{A}\in \MG$. Then, $\theta$ is a congruence of $\mathbf{A}$ if and only if $\theta =  \theta(Y) := \{(a,b)\in A\times A:\sigma(a)\cap Y = \sigma(b)\cap Y\}$ for some saturated closed increasing subset $Y$ of $\X(\mathbf{A})$. Consequently, $Y \mapsto \theta(Y)$ is an anti-isomoprhism from the lattice of saturated closed increasing subsets of $X(\mathbf{A})$ onto the lattice of congruences of $\mathbf{A}$.
\end{thm}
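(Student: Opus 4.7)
The strategy is to piggyback on the Heyting-algebra case. From the Esakia/Priestley duality for Heyting algebras, inherited by Gödel algebras, the map $Y \mapsto \theta(Y)$ is already an anti-isomorphism between the lattice of closed increasing subsets of $\X(\mathbf{A})$ and the lattice of Heyting-algebra congruences of $\mathbf{A}$. Every monadic congruence is in particular a Gödel congruence, so the theorem reduces to identifying which closed increasing sets $Y$ give a $\theta(Y)$ that is additionally compatible with $\exists$ and $\forall$; the claim is that these are exactly the saturated ones.

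For the ``saturated $\Rightarrow$ monadic congruence'' direction, I would use the identities $\sigma(\exists a) = \exists \sigma(a)$ (from the Q-space duality) and $\sigma(\forall a) = \forall \sigma(a)$ (from \thmref{thmref:sigma_forall}). Assume $\sigma(a) \cap Y = \sigma(b) \cap Y$ with $Y$ saturated. Given $P \in \exists \sigma(a) \cap Y$, pick $Q \in \bar{P} \cap \sigma(a)$; saturation forces $Q \in Y$, hence $Q \in \sigma(b) \cap Y$, so $P \in \exists \sigma(b) \cap Y$. Similarly, if $\bar{P} \subseteq \sigma(a)$ and $P \in Y$, then $\bar{P} \subseteq Y$, so every $Q \in \bar{P}$ lies in $\sigma(b)$ and $P \in \forall \sigma(b) \cap Y$. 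Thus $\theta(Y)$ is compatible with both quantifiers.

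For the converse, let $\theta$ be a monadic congruence and let $Y$ be the closed increasing set with $\theta = \theta(Y)$ supplied by the Heyting case. Suppose for contradiction that $P \in Y$ and $Q \in \bar{P} \setminus Y$. Since closed increasing subsets of a Priestley space are intersections of clopen increasing sets, and every clopen increasing subset of $\X(\mathbf{A})$ has the form $\sigma(a)$, there is $a \in A$ with $Y \subseteq \sigma(a)$ and $a \notin Q$. Then $\sigma(a) \cap Y = Y = \sigma(1) \cap Y$ gives $(a,1) \in \theta$; by monadicity $(\forall a, 1) \in \theta$, hence $Y \subseteq \sigma(\forall a)$ and $\forall a \in P$. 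Since $\forall a \in \forall A = \exists A$ (\lemref{LEMA: props basicas}(1)) and $P \cap \exists A = Q \cap \exists A$, we conclude $\forall a \in Q$; combined with $\forall a \leq a$, this forces $a \in Q$, contradicting $a \notin Q$. Hence $Y$ is saturated.

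The main obstacle is this converse: producing a separating element $a$ via the Priestley representation of closed increasing sets, and then converting $\theta$-compatibility with $\forall$ into forced membership in the $E$-class of $P$. Once both directions are in place, the lattice anti-isomorphism follows automatically by restricting the Heyting-case anti-isomorphism to the sublattice of saturated closed increasing subsets, since $Y \mapsto \theta(Y)$ already reverses inclusions on the larger lattice.
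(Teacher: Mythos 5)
Your proposal is correct and follows essentially the same route as the paper: reduce to the Gödel/Heyting correspondence between congruences and closed increasing sets, then show that saturation of $Y$ is exactly what makes $\theta(Y)$ compatible with $\exists$ and $\forall$, using a clopen increasing $\sigma(a)\supseteq Y$ separating $Y$ from a point outside it together with $(\forall a,1)\in\theta$ for the converse. The only (cosmetic) differences are that you verify $\exists$-compatibility directly rather than citing Cignoli, and you close the converse via $\forall a\in\exists A$ and the definition of $E$ instead of exhibiting a witness $R\in\overline{P}\cap Y$ with $R\notin\sigma(\forall a)$; both variants are sound.
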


\begin{proof}
Let $\theta$ be a congruence of $\mathbf{A}$. We know that there is a closed increasing subset $Y \subseteq \X(\mathbf{A})$ such that $\theta =\theta(Y)$. It only remains to show that $Y$ is saturated. Let us suppose that there exists $P\in \exists Y \setminus Y$. Then, $\overline{P}\not\subseteq Y$. Since $Y$ is increasing, if $Q\in Y$, then we have that $Q\not\subseteq P$. Thus, for each $Q \in Y$, there is $a_Q\in A$ such that $Q\in \sigma (a_Q)$ and $P\notin \sigma (a_Q)$. Then, $$Y\subseteq \bigcup_{Q\in Y} \sigma (a_Q).$$ Since $Y$ is compact, there exists $a\in A$ such that $Y\subseteq \sigma (a)$ and $P\notin \sigma (a)$. From $\sigma (a)\cap Y = Y = \sigma (1)\cap Y$, we obtain that $(a,1)\in\theta$. Let us see that $(\forall a, 1)\notin \theta$, which is a contradiction. Indeed, $\overline{P}\not\subseteq Y$, $P\notin \sigma (a)$ and $P\in \exists Y$. Then, there exists $R\in \overline{P} \cap Y $, that is, $R\notin \sigma (\forall a)\cap Y$. But $R\in \sigma (1)\cap Y=Y$.
		
Conversely, let $Y$ be a saturated closed increasing subset of $\X(\mathbf{A})$ such that $\theta= \theta(Y) $. In particular, we know that $\theta$ is a congruence of the Gödel reduct of $\mathbf{A}$. Moreover, by \cite[Lemma 3.1]{Cignoli91}, we also know that $\theta $ preserves $\exists$. We need to prove that $\theta $ preserves $\forall$. Let $a, b\in A$ such that $\sigma (a)\cap Y=\sigma (b)\cap Y$. By taking into account \thmref{thmref:sigma_forall}, and since $Y$ is saturated, we have that $\forall(\sigma (a)\cap Y)= \forall \sigma(a)\cap \forall Y= \sigma (\forall a)\cap Y$. Analogously $\forall(\sigma (b)\cap Y)= \sigma (\forall b)\cap Y$, which implies what we wanted.\qedhere	
	\end{proof}

In the following theorem we characterize the MG-spaces corresponding to the algebras in the subvarieties introduced in Section \ref{sec: subvar loc fin}. Given an MG-space $\langle X;\leq,\tau,E\rangle$, observe that, since any class $\overline{x}$ is closed, $\min\overline{x} \neq \emptyset$, where $\min\overline{x}$ is the set of minimal elements of $\overline{x}$. Moreover, since principal decreasing subsets of $X$ are also closed, $\overline{x} \subseteq [\min\overline{x})$ (see \cite{DP02-libro}).

\begin{thm}\label{MGspaceC_D_M} Let $\mathbf{A}\in \mathbb{MG}$, $\mathbf{X}(\mathbf{A})$ be its associated MG-space. Then:
\begin{enumerate}[\rm (1)]
\item $\mathbf{A} \in \mathbb{W}_k$ if and only if the equivalence class $\overline{P}$ has at most $k$ minimal elements for every $P \in \X(\mathbf{A})$. 
\item $\mathbf{A}\in \mathbb{W}_1$ if and only if the equivalence class $\overline{P}$ has exactly one minimal element for every $P \in \X(\mathbf{A})$.
\item $\mathbf{A} \in \mathbb{H}_n$ if and only if the chain $[P)$ has at most $n-1$ elements for every $P \in \X(\mathbf{A})$. \item $\mathbf{A} \in \mathbb{H}_n^\exists$ if and only if $[P)/E$ has at most $n-1$ elements for every $P \in \X(\mathbf{A})$.
\end{enumerate}
\end{thm}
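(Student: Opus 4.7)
The plan is to translate each defining identity of the four subvarieties into a condition on the MG-space $X(\mathbf{A})$ via the isomorphism $\sigma\colon \mathbf{A} \to \mathbf{I}(X(\mathbf{A}))$, and to handle the four items in turn.

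For (1), the identity $(\alpha_k)$ becomes the inclusion $\bigcap_{i<j}\forall(U_i\cup U_j) \subseteq \bigcup_{i=1}^{k+1}\forall U_i$ for all $U_1,\dots,U_{k+1}\in \D(X(\mathbf{A}))$. By the definition of $\forall$ on the dual side, a point $P$ lies in the left-hand side exactly when $\overline{P}\subseteq U_i\cup U_j$ for every pair $i<j$, and in the right-hand side exactly when $\overline{P}\subseteq U_i$ for some $i$. Assuming every class has at most $k$ minimal elements, a failure of $(\alpha_k)$ at $P$ would provide $R_i\in\overline{P}\setminus U_i$ for each $i$; descending to a minimal element $Q_i\in\min\overline{P}$ below $R_i$ and using that $U_i$ is increasing, we get $Q_i\notin U_i$, and pigeonhole forces $Q_i=Q_j$ for some $i\neq j$, contradicting $\overline{P}\subseteq U_i\cup U_j$. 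Conversely, if $\overline{P}$ has $k+1$ distinct minimal elements $Q_1,\dots,Q_{k+1}$, their pairwise incomparability together with Priestley separation yield clopen increasing sets $U_i$ with $Q_i\notin U_i$ and $Q_\ell\in U_i$ for $\ell\neq i$; since $\overline{P}\subseteq[\min\overline{P})$ and each $U_i$ is increasing, every $R\in\overline{P}$ lies in $U_i\cup U_j$ whenever $i\neq j$, witnessing the failure of $(\alpha_k)$.

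Part (2) is the case $k=1$ of (1), combined with the observation, recalled in the excerpt, that $\min\overline{P}$ is always nonempty, so ``at most one'' becomes ``exactly one''. Part (3) follows directly from classical Gödel duality: the identity defining $\mathbb{H}_n$ involves only Gödel operations, and it is known that a Gödel algebra satisfies it iff every chain $[P)$ in its dual space has at most $n-1$ elements.

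For (4), I first observe that the identity $\bigvee_{i=1}^n(\exists x_i \implica \exists x_{i+1})\approx 1$ holds in $\mathbf{A}$ iff the Gödel identity $\bigvee_{i=1}^n(x_i \implica x_{i+1})\approx 1$ holds in the subalgebra $\exists \mathbf{A}$, since $\exists x$ ranges over $\exists A$ as $x$ ranges over $A$. The dual Priestley space of $\exists \mathbf{A}$ is identified with the set of $E$-classes of $X(\mathbf{A})$ via $\overline{P}\mapsto P\cap \exists A$ (a bijection by a routine Prime Filter Theorem argument), with the inclusion order on $\X(\exists \mathbf{A})$ corresponding to $P\cap \exists A\subseteq Q\cap \exists A$. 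Gödel duality applied to $\exists \mathbf{A}$ then yields: $\mathbf{A}\in\mathbb{H}_n^\exists$ iff the set $\{\overline{Q}: P\cap\exists A\subseteq Q\cap\exists A\}$ has at most $n-1$ elements for every $P$. To conclude, I will show that this set equals $[P)/E$: the inclusion $[P)/E\subseteq\{\overline{Q}: P\cap\exists A\subseteq Q\cap\exists A\}$ is immediate, and for the reverse, given $P\cap\exists A\subseteq Q\cap\exists A$, a standard separation argument with the filter generated by $P\cup(Q\cap\exists A)$ and the ideal generated by $\exists A\setminus Q$ produces a prime filter $R\supseteq P$ with $R\cap\exists A=Q\cap\exists A$, so $\overline{R}=\overline{Q}\in [P)/E$.

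The main obstacle lies in (1) and (4): in (1), producing the separating $U_i$'s uses Priestley order-disconnectedness together with the closure of classes so that their minimal elements dominate them; in (4), the nontrivial inclusion requires the Prime Filter Theorem lift above, since \thmref{thmref:sigma_forall} provides only the ``downward'' lifting for $\forall$.
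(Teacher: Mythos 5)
Your proposal is correct and follows essentially the same route as the paper: pigeonhole on the minimal elements of an $E$-class (using $\overline{P}\subseteq[\min\overline{P})$) for (1), specialization and classical Gödel duality for (2)--(3), and a Prime Filter Theorem lift from $\X(\exists\mathbf{A})$ to $\X(\mathbf{A})$ for (4), with the very same filter/ideal pair and the computation $\exists(p\wedge q)=\exists p\wedge q$ that the paper uses. The only substantive variation is in the forward direction of (1), where you obtain the witnessing clopen increasing sets $U_i$ by Priestley order-disconnectedness around the $k+1$ pairwise incomparable minimal elements, whereas the paper constructs explicit terms $b_i=a_i\implica\bigvee_{j\neq i}a_j$ and invokes prelinearity to get $b_i\vee b_j=1$; both correctly exhibit a failure of $(\alpha_k)$.
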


\begin{proof}
\begin{enumerate}[(1)]
\item Let $\mathbf{A}$ be a monadic Gödel algebra that satisfies $(\alpha_k)$. Let us suppose that there exists $P\in \X(\mathbf{A})$ such that $\overline{P}$ has $k+1$ minimal elements $\{N_i:1\leq i\leq k+1\}$. For each $i$ and $j$, $i\neq j$, take $a_{ij}\in A$ such that $a_{ij} \in N_i\setminus N_j$, and  let $a_i = \displaystyle \bigwedge_{j\in \{1, \cdots, k+1\}\setminus\{i\} } a_{ij}$. Clearly $a_i \in N_i$, but $a_i \not\in N_j$ for $j \ne i$. Take $b_i= a_i\implica \bigvee_{j\neq i} a_{j}$, $1\leq i\leq k+1$. Then, for $i \ne t$ we have $$b_i \vee b_t = \bigvee_{j\neq i}(a_i\implica  a_{j})\vee  \bigvee_{j\neq t}(a_t\implica  a_{j}) = 1$$ by prelinearity. So, $\bigwedge_{1\leq i<j\leq k+1}\forall (b_i\vee b_j) =1$. On the other hand, note that $b_i \not\in N_i$ for $1 \leq i \leq k+1$, for otherwise $\bigvee_{j \ne i} a_j \in N_i$ and some $a_j \in N_i$ with $j \ne i$. Then $N_i \not\in \sigma(b_i)$, so  $\overline{P}\not\subseteq \sigma(b_i)$ for any $i$. Hence $P \notin \bigcup \forall \sigma(b_i)$, that is, $\bigvee_{i=1}^{k+1} \forall b_i \ne 1$. This contradicts that $\mathbf{A}$ satisfies $(\alpha_k)$.

Conversely, let us suppose now that each equivalence class $\overline{P}$ has at most $k$ minimal elements. Fix $b_1, \ldots, b_{k+1} \in A$. Let us see that $\bigcap_{1\leq i<j\leq k+1}\forall (\sigma(b_i)\cup \sigma(b_j))\subseteq \bigcup_{1\leq j\leq k+1}\forall \sigma(b_j)$. Let $P\in\bigcap_{1\leq i<j\leq k+1}\forall (\sigma(b_i)\cup \sigma(b_j)) $. Then, $\overline{P}\subseteq \sigma(b_i)\cup \sigma(b_j)$ for each $i\neq j$. Observe that if $N\in \min \overline {P}$ and $N\notin\sigma(b_i)$ for some $i$, then $N\in \sigma(b_j)$ for all $j\neq i$. Since we have at most $k$ minimal elements in $\overline{P}$ then there exists $t\in \{1, \cdots, k+1\}$ such that $\min \overline{P}\subseteq \sigma(b_t)$. Thus, $\overline P\subseteq \sigma(b_t)$ and so $P\in  \bigcup_{1\leq j\leq k+1}\forall \sigma(b_j)$.
 
\item It follows from the previous item setting $k=1$.

\item The corresponding result for Gödel algebras is well known. The present one follows immediately since the identity that defines $\mathbb{H}_n$ does not involve the quantifiers.

\item By the previous item, $\mathbf{A} \in \mathbb{H}_n^\exists$ if and only if $[Q)$ has at most $n-1$ elements for every $Q \in \X(\exists \mathbf{A})$ (here $[Q)$ is considered in $\X(\exists \mathbf{A})$). We now show that the last condition is equivalent to the one stated in the theorem.

Suppose $P_1 \subsetneq \ldots \subsetneq P_n$ is a chain of $n$ prime filters in $\mathbf{A}$ such that $(P_i,P_j) \notin E$ for $i \ne j$. It follows immediately that $P_1 \cap \exists A \subsetneq \ldots \subsetneq P_n \cap \exists A$ is a chain of $n$ prime filters in $\exists \mathbf{A}$.

Conversely, let $Q_1 \subsetneq \ldots \subsetneq Q_n$ be a chain of $n$ prime filters in $\exists\mathbf{A}$. We build a chain $P_1 \subsetneq \ldots \subsetneq P_n$ of $n$ prime filters in $\mathbf{A}$ such that $(P_i,P_j) \not\in E$ for $i \ne j$. First let $F$ be the filter in $\mathbf{A}$ generated by $Q_1$ and let $J$ be the ideal in $\mathbf{A}$ generated by $\exists A \setminus Q_1$. Since $F \cap J = \emptyset$, by the Prime Filter Theorem, there is a prime filter $P_1$ in $\mathbf{A}$ such that $F \subseteq P_1$ and $P_1 \cap J = \emptyset$. It follows that $P_1 \cap \exists A = Q_1$. Now assume $P_1 \subsetneq \ldots \subsetneq P_k$, $k < n$, are already defined in such a way that $P_i \cap \exists A = Q_i$ for $1 \leq i \leq k$; we show how to build $P_{k+1} \in \X(\mathbf{A})$ so that $P_{k+1} \cap \exists A = Q_{k+1}$. Let $F$ be the filter generated by $P_k \cup Q_{k+1}$ and let $J$ be the ideal generated by $\exists A \setminus Q_{k+1}$. We claim that $F \cap J = \emptyset$. Indeed, if $x \in  F \cap J$, then there is $p \in P_k, q \in Q_{k+1}$ and $q' \in \exists A \setminus Q_{k+1}$ such that $p \wedge q \leq x \leq q'$. Hence $\exists(p \wedge q) = \exists p \wedge q \leq q'$. But 
$\exists p \in P_k \cap \exists A = Q_k \subseteq Q_{k+1}$. Then $\exists p \wedge q \in Q_{k+1}$, so $q' \in Q_{k+1}$, a contradiction. By the Prime Filter Theorem, there is a prime filter $P_{k+1}$ in $\mathbf{A}$ containing $F$ such that $P_{k+1} \cap J = \emptyset$. From these conditions it follows that $P_k \subseteq P_{k+1}$ and $P_{k+1} \cap \exists A = Q_{k+1}$. \qedhere
\end{enumerate}
\end{proof} 

\section{The subvariety generated by chains}\label{sec:libresenC}

In this section we study the subvariety $\mathbb{W}_1$ generated by the chains in $\mathbb{MG}$. We give first a totally ordered characteristic chain for $\mathbb{W}_1$, that is, a single totally ordered algebra that generates $\mathbb{W}_1$. Then we prove a Glivenko type theorem for $\mathbb{W}_1$ and finally we characterize the free algebra in $\mathbb{W}_1$ with $n$ generators following a method analogous to those given in \cite{AbadMonteiro} and \cite{rueda01}. 


\subsection{A characteristic chain for $\mathbb{W}_1$}

In this section we give a characteristic chain for the variety $\mathbb{W}_1$, that is, a monadic Gödel chain $\mathbf{A}$ such that $V(\mathbf{A}) = \mathbb{W}_1$.

As shown in Section 2, $\mathbb{W}_1$ is a locally finite variety, and, thus, it is generated by its finite members; moreover, $\mathbb{W}_1$ is generated by all finite monadic Gödel chains. Next we give a full description of finite chains and define an infinite chain $\mathbf{A}$ such that every finite chain belongs to $HS(\mathbf{A})$.

Given a Gödel chain $\mathbf{C}$ we know that a subalgebra $\mathbf{C}'$ of $\mathbf{C}$ is the range of the quantifiers of an expansion $\langle \mathbf{C}, \exists, \forall\rangle$ in $\mathbb{MG}$ if and only if conditions (s1), (s2$_\ell$) and (s3) are met (see Section 1). Condition (s3) is trivially satisfied in Gödel algebras and condition (s2$_\ell$) is trivial in chains, since $1$ is join-irreducible. If, in addition, $\mathbf{C}$ is finite, given any subset $X$ of $C$, we can define quantifiers $\exists$ and $\forall $ over $C$ such that $\exists C=X\cup\{0,1\}$ in the following way $$\exists
 a := \min \{c \in X\cup\{0,1\}: c \geq a\}, \qquad \forall a := \max \{c \in X\cup\{0,1\}: c \leq a\},$$ for each $a\in C$.

Let $m$ be a non-negative integer. Let $C_m$ be the chain with $m+2$ elements $$0 = a_0 < a_1 < \ldots < a_m < a_{m+1} = 1.$$ Let $X := \{0=b_0, b_1, \ldots, b_r, b_{r+1}=1\}\subseteq C_m$, where $r\leq m$ and $b_i< b_j$ if $i< j$. For each $i\in \{0, \ldots, r\}$, let $(b_{i}, b_{i+1}) := \{a\in C_m: b_{i}<a<b_{i+1}\}$, and let $m_i$ be the cardinal of this set. We denote by $\mathbf{C}_{(m, m_0, \ldots, m_{r})}$ the monadic Gödel algebra $\langle \mathbf{C}_m, \exists , \forall \rangle$ such that $\exists C_m= X$. Observe that $\displaystyle r+\sum_{i=0}^{r}m_i=m$. Clearly, if $\mathbf{C}$ is a finite monadic Gödel chain, then $\mathbf{C}$ is isomorphic to some $\mathbf{C}_{(m, m_0, \ldots, m_{r})}$. 

Let $A := (\mathbb{N}_0\times \mathbb{N}_0)\cup \{\top\}$, where $\mathbb{N}_0 := \mathbb{N}\cup \{0\}$. Let us define on $A$ the following total order: 
\begin{itemize}
 \item $(a,b)< \top$, for any $(a,b) \in \mathbb{N}_0\times \mathbb{N}_0$,
 \item $(a, b)\leq (c,d)$ if and only if $a<c$, or, $a=c$ and $b\leq d$, (lexicographical order)
\end{itemize}
and the following quantifiers 
\[\exists \top=\top \text{ and } 
\exists (a,b) =
 \begin{cases}
 (a,0) &\text{if $b = 0$}\\
 (a+1,0)  &\text{if $b \neq 0$}
 \end{cases}, 
\]
and 
\[\forall \top=\top, \text{ and, } 
\forall (a,b) = (a,0). \]

It is clear that $\mathbf{A}= \langle A; \vee, \wedge, \implica,\exists, \forall, (0,0), 
\top\rangle$ is a monadic Gödel chain. Let us show that every finite chain $\mathbf{C}_{(m, m_0, \ldots, m_{r})}$ is a homomorphic image of a subalgebra of $\mathbf{A}$. 

Let $\mathbf{S}$ be the subalgebra of $\mathbf{A}$ whose subuniverse is given by 
\[S=\{(i, j): 0 \leq i \leq r, 0 \leq j \leq m_{i}\}\cup \{(r+1,0)\}\cup \{ \top\}.\] 

Let us rename the elements of $\mathbf{C}_{(m, m_0, \ldots, m_{r})}$ in the following way
\[\{a_{(0,0)}=0,a_{(0,1)}, \ldots , a_{(0,m_0)}, b_1=a_{(1,0)}, \ldots , a_{(1,m_1)}, \ldots ,
a_{(r,0)}=b_{r}, \ldots , a_{(r,m_r)}, \top =b_{r+1}\}.\] 

If we define $h\colon  \mathbf{S}\to \mathbf{C}_{(m, m_0, \ldots, m_{r})}$ as 
\begin{center}
$h((i,j))=a_{(i,j)}$, \quad $h((r+1, 0))= h(\top) = \top,$ 
\end{center}
then it is straightforward to prove that $h$ is an homomorphism and $\mathbf{C}_{(m, m_0, \ldots, m_{r})}$ is a homomorphic image of $\mathbf{S}$. So, all finite chains of $\mathbb{W}_1$ are in the variety generated by $\mathbf{A}$ and consequently, $\mathbf{A}$ is a characteristic algebra for this variety.

\subsection{A Glivenko-type theorem for $\mathbb{W}_1$}

We recall some definitions of special elements. Given a (monadic) Gödel algebra $\mathbf{A}$, an element $a \in A$ is said to be:
\begin{compactitem}
\item {\em dense} if $\neg a = 0$;
\item {\em regular} if $\neg \neg a = a$;
\item {\em boolean} if $a \vee \neg a = 1$.
\end{compactitem}
We denote by $D(\mathbf{A})$, $Reg(\mathbf{A})$ and $B(\mathbf{A})$ the set of dense, regular and boolean elements of $\mathbf{A}$, respectively.

Let $\mathbf{A}$ be a Gödel algebra. In this case $B(\mathbf{A}) = Reg(\mathbf{A})$ is a subuniverse of $\mathbf{A}$ and we denote the corresponding subalgebra by $\mathbf{Reg}(\mathbf{A})$. Moreover, $D(\mathbf{A})$ is an filter and, by Glivenko's theorem, we have that $\mathbf{Reg}(\mathbf{A})\cong \mathbf{A}/D(\mathbf{A})$ and the map $r\colon \mathbf{A}\to \mathbf{Reg}(\mathbf{A})$ defined by $r(a)= \neg \neg a$ is a surjective homomorphism. Note that, since $r(a) = r(\neg \neg a)$ for all $a\in A$, $\neg \neg a \implica a \in D(\mathbf{A})$.

\begin{lem}
For any monadic Gödel algebra $\mathbf{A}$, $B(\mathbf{A}) = Reg(\mathbf{A})$ is a subuniverse of $\mathbf{A}$.
\end{lem}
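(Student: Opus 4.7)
The plan is to reduce the statement to showing closure of $B(\mathbf{A}) = Reg(\mathbf{A})$ under the two quantifiers, since closure under the Gödel operations $\{\wedge, \vee, \implica, 0, 1\}$ is precisely the classical (Glivenko) fact recalled in the paragraph preceding the lemma. Hence I would split the argument into two short steps, one for $\exists$ and one for $\forall$, the second relying on the first.

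For closure under $\exists$: given Boolean $a$, i.e.\ $a \vee \neg a = 1$, the inequality $\exists a \geq a$ yields $\exists a \vee \neg a = 1$ at once. Applying $\forall$ and using identity (M\ref{M4}) in the form $\forall(\exists x \vee y) = \exists x \vee \forall y$ together with property (13) $\forall \neg a = \neg \exists a$ from the arithmetical lemma, one gets
\[
1 = \forall 1 = \forall(\exists a \vee \neg a) = \exists a \vee \forall \neg a = \exists a \vee \neg \exists a,
\]
so $\exists a \in B(\mathbf{A})$.

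For closure under $\forall$: since $\neg$ preserves the Boolean elements of any Heyting algebra (if $a \vee \neg a = 1$ then $\neg a \vee \neg\neg a \geq \neg a \vee a = 1$), the element $\neg a$ is Boolean, and the previous step gives that $\exists \neg a$ is also Boolean. I would then write
\[
1 = \exists \neg a \vee \neg \exists \neg a = \exists \neg a \vee \forall \neg\neg a = \exists \neg a \vee \forall a,
\]
where the second equality uses (13) applied to $\neg a$, and the third uses $\neg\neg a = a$ (Boolean implies regular). Invoking property (14) $\exists \neg a \leq \neg \forall a$ finally yields
\[
1 = \exists \neg a \vee \forall a \leq \neg \forall a \vee \forall a,
\]
so $\forall a \in B(\mathbf{A})$.

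The step I expect to be the crux is the first one: identifying that the identity launching the whole argument is exactly (M\ref{M4}) applied to the pair $(a, \neg a)$, after which everything becomes bookkeeping with the list of arithmetical identities. The only subtle point is keeping track of when one is allowed to replace $\neg\neg a$ by $a$ (i.e.\ invoking regularity rather than Booleanness), but since in a Gödel algebra the two notions coincide this causes no actual difficulty.
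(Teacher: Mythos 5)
Your proposal is correct and follows essentially the same route as the paper: reduce to closure under the quantifiers and compute with (M4) together with properties (13) and (14); your $\exists$-step is literally the paper's computation read in reverse. For $\forall$ the paper is marginally more direct ($\forall b \vee \neg\forall b \geq \forall b \vee \exists\neg b = \forall(b \vee \exists\neg b) = 1$), avoiding the detour through the Booleanness of $\exists\neg a$ and the substitution $\neg\neg a = a$, but your version is equally valid.
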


\begin{proof}
We already know that $B(\mathbf{A})$ is a subuniverse of the Gödel reduct of $\mathbf{A}$. It remains to show that it is also closed under $\exists$ and $\forall$. Let $b \in B(\mathbf{A})$. Then $\exists b \vee \neg \exists b = \exists b \vee \forall \neg b = \forall(\exists b \vee \neg b) = \forall 1 = 1$, so $\exists b \in B(\mathbf{A})$. Analogously, $\forall b \vee \neg \forall b \geq \forall b \vee \exists \neg b = \forall(b \vee \exists \neg b) = \forall 1 = 1$, so $\forall b \in B(\mathbf{A})$ too.
\end{proof}

We introduce now some definitions pertaining specifically to monadic algebras. Let $\mathbf{A}$ be a monadic Gödel algebra. We denote by $D_\forall(\mathbf{A})$ the set of elements $a \in A$ such that $\forall a \in D(\mathbf{A})$, that is, $D_\forall(\mathbf{A}) = \{a \in A: \neg \forall a =0\}$. It is  straightforward to see that the set $D_\forall(\mathbf{A}) $ is a monadic filter of $\mathbf{A}$. Moreover, $\mathbf{A}/D_\forall(\mathbf{A})\in \mathbb{H}^\exists_2$. Indeed, since $\exists a \implica \neg \neg \exists a = 1 \in D_\forall(\mathbf{A})$ and $\neg \forall (\neg \neg \exists a\implica \exists a) = \neg (\neg \neg \exists a \implica \exists a) = 0$, we have that $\exists a$ and $\neg \neg \exists a$ are identified in the quotient $\mathbf{A}/D_\forall(\mathbf{A})$; thus, $\exists \left( \mathbf{A}/D_\forall(\mathbf{A})\right)$ is a Boolean algebra. Bezhanishvili in \cite[Corollary 6]{bezhanishvili2001} proved a similar result for monadic Heyting algebras by means of Esakia spaces. 

We denote by $Reg_\forall (\mathbf{A})$ the set of elements $a \in A$ such that $\forall a \in Reg(\mathbf{A})$, that is, $Reg_\forall (\mathbf{A})= \{a\in A: \neg\neg\forall a = \forall a\}$. The following lemma shows that for algebras in the subvariety $\mathbb{W}_1$ the set $Reg_\forall(\mathbf{A})$ may be endowed with a structure of monadic Gödel algebra in a natural way. Observe that algebras in $\mathbb{W}_1$ satisfy the identity $\neg \neg x \approx \neg \neg \exists x$.


\begin{lem}
If $\mathbf{A} \in \mathbb{W}_1$, then $\mathbf{Reg_\forall(A)} := \langle Reg_\forall (\mathbf{A}); \vee, \wedge, \implica,\exists_r, \forall, 0,1 \rangle\in\mathbb{H}^\exists_2 \cap \mathbb{W}_1$, where $\exists_r a := \neg\neg a = \neg \neg \exists a $.
\end{lem}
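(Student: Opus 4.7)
The argument has three stages: (i) verifying that $Reg_\forall(\mathbf{A})$ is closed under every operation of the proposed structure; (ii) checking that the monadic Gödel axioms hold for the inherited operations together with $\exists_r$; and (iii) confirming that the resulting algebra lies in both $\mathbb{W}_1$ and $\mathbb{H}^\exists_2$. The easy closures are $0$, $1$, $\wedge$, $\forall$, $\exists_r$: $\forall\forall a = \forall a$; the set of boolean (equivalently, regular) elements of any Gödel algebra is a Boolean subalgebra, hence closed under meets; and $\exists_r a = \neg\neg \exists a$ lies in $\exists A$ (since $\exists \mathbf{A}$ is a subalgebra), is fixed by $\forall$, and equals its own double negation, so $\forall(\exists_r a) = \exists_r a$ is regular. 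Closure under $\vee$ is where the hypothesis $\mathbf{A}\in\mathbb{W}_1$ first enters: I invoke $\forall(a\vee b)=\forall a\vee\forall b$ together with closure of the booleans under joins.

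The main obstacle is closure under $\implica$, since $\forall$ does not preserve implication in general. I would handle it via Corollary~\ref{COR: representacion de algebras de ancho k} specialized to $k=1$: every $\mathbf{A}\in\mathbb{W}_1$ is a subdirect product of monadic Gödel chains $\mathbf{C}_i$. In each chain the regular elements are exactly $\{0,1\}$ and $a\implica b\in\{1,b\}$, so $\forall_i(a\implica b)\in\{1,\forall_i b\}\subseteq\{0,1\}$ whenever $\forall_i b\in\{0,1\}$. Transferring this coordinatewise yields that $\forall(a\implica b)$ is regular in $\mathbf{A}$ whenever $\forall a$ and $\forall b$ are; closure under $\neg$ then follows from $\neg a=a\implica 0$.

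For stage (ii), the Gödel identities together with (M1), (M3), (M4), (M5) pass from $\mathbf{A}$ to $\mathbf{Reg_\forall(A)}$ because the operations are inherited; (M4) additionally uses $\exists_r a\in \exists A$ and the identity $\forall(c\vee b)=c\vee\forall b$ for $c\in\exists A$. The delicate case is (M2), namely $\forall(a\implica \forall b)=\exists_r a\implica \forall b$. The identity $\forall(a\implica c)=\exists a\implica c$ for $c=\forall b\in \exists A$ reduces this to $\exists a\implica \forall b=\neg\neg\exists a\implica \forall b$, a special case of the general Heyting fact that $x\implica y=\neg\neg x\implica y$ whenever $y$ is regular. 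The only nontrivial inequality is
\[(x\implica y)\wedge\neg\neg x\leq \neg\neg(x\implica y)\wedge\neg\neg x=\neg\neg(x\wedge(x\implica y))\leq\neg\neg y=y,\]
which uses only that $\neg\neg$ preserves finite meets.

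Finally, $\mathbf{Reg_\forall(A)}\in\mathbb{W}_1$ is immediate since the identity $\forall(x\vee y)\approx \forall x\vee\forall y$ is inherited. Membership in $\mathbb{H}^\exists_2$ amounts to showing the image of $\exists_r$ is Boolean: each $\exists_r a=\neg\neg\exists a$ is a double negation, so $\exists_r a\vee\neg\exists_r a=\neg\neg\exists a\vee\neg\exists a=1$, because $\neg\neg c\vee\neg c\approx 1$ holds in every Gödel algebra (routine case analysis in chains).
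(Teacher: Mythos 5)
Your proposal is correct and follows essentially the same route as the paper: closure of $Reg_\forall(\mathbf{A})$ under the lattice operations via $\forall(x\vee y)=\forall x\vee\forall y$, closure under $\implica$ by reducing to monadic Gödel chains, verification of (M2) by replacing $\exists a$ with $\neg\neg\exists a$ in front of the regular element $\forall b$, and membership in $\mathbb{H}_2^\exists$ via the Boolean image of $\exists_r$. The only differences are expository: you spell out the subdirect reduction to chains and the Heyting identity $x\implica y=\neg\neg x\implica y$ for regular $y$, which the paper leaves as ``easy to see'' or proves by a contraposition computation.
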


\begin{proof}
Let $a, b \in Reg_\forall (\mathbf{A})$. As $\forall (a\vee b)=\forall a \vee\forall b \in Reg(\mathbf{A})$, we have $a \vee b\in Reg_\forall (\mathbf{A})$, and since $\forall (a\wedge b)=\forall a \wedge\forall b \in Reg(\mathbf{A})$, we also have that $a \wedge b \in Reg_\forall(\mathbf{A})$. It is easy to see that in a monadic Gödel chain, if $	\forall b = \neg \neg \forall b$, then $\forall (a \implica b)=\neg \neg \forall(a \implica b)$; so, $a \implica b\in Reg_\forall(\mathbf{A})$. 
	
Thus, $\langle Reg_\forall(\mathbf{A}); \vee, \wedge, \implica, 0,1 \rangle$ is a subalgebra of $\langle A; \vee, \wedge, \implica, 0,1 \rangle$. It remains to show that	$\langle Reg_\forall (\mathbf{A}); \exists_r, \forall \rangle$ satisfies axioms (M1)-(M4) and (M\ref{axiomac}). Clearly $\exists_r a, \forall a \in  Reg_\forall (\mathbf{A})$, for all \linebreak $a \in  Reg_\forall (\mathbf{A})$, and (M1), (M3), (M4) and (M\ref{axiomac}) hold. To see (M2), since $\forall(a \implica \forall b)=\exists a \implica \forall b$, we need to prove $\exists a \implica \forall b = \exists_r a \implica \forall b$. Indeed:
$$\exists a \implica \forall b = \exists a \implica \neg \neg \forall b = \neg \forall b \implica \neg \exists a = \neg \forall b \implica \neg \exists_r a = \exists_r a \implica \neg \neg \forall b = \exists_r a \implica \forall b.$$
Thus, $\mathbf{Reg_\forall(A)} \in\mathbb{W}_1$. Finally, since $\exists_r Reg_\forall (\mathbf{A})= Reg (\mathbf{A})$, then $\mathbf{Reg_\forall(A)} \in\mathbb{H}^\exists_2$.
\end{proof}

We can now prove a Glivenko theorem for algebras in $\mathbb{W}_1$.

\begin{thm}
If $\mathbf{A} \in \mathbb{W}_1$, then $\mathbf{Reg_\forall (A)} \cong \mathbf{A}/D_\forall(\mathbf{A})$.
\end{thm}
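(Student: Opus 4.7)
The plan is to introduce the term-defined map $\Phi\colon \mathbf{A}\to \mathbf{Reg_\forall(A)}$ given by $\Phi(x):=x\vee \neg\neg\forall x$, show that it is a surjective monadic Gödel homomorphism, and identify its kernel as the congruence associated to the monadic filter $D_\forall(\mathbf{A})$; the first isomorphism theorem will then give the desired isomorphism.

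First I would check that $\Phi$ takes values in $Reg_\forall(\mathbf{A})$ and is a retraction. Applying the arithmetic identity $\forall(y\vee c)=\forall y\vee c$ with $c=\neg\neg\forall x\in \exists A$ yields $\forall \Phi(x)=\forall x \vee \neg\neg\forall x=\neg\neg\forall x$, which is regular in $\exists \mathbf{A}$, so $\Phi(x)\in Reg_\forall(\mathbf{A})$. For $b\in Reg_\forall(\mathbf{A})$ we have $\neg\neg\forall b=\forall b$, whence $\Phi(b)=b\vee \forall b=b$; hence $\Phi$ is onto $Reg_\forall(\mathbf{A})$.

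For the homomorphism property, since $\mathbb{W}_1$ is generated by monadic Gödel chains, it suffices to check each preservation equation on chains. On a chain, $\Phi$ acquires the three-case description $\Phi(0)=0$; $\Phi(a)=a$ if $0<a$ and $\forall a=0$; $\Phi(a)=1$ if $\forall a>0$, and a short case analysis confirms preservation of $\wedge$, $\vee$, $\implica$, and $\forall$. For the existential quantifier, since $\exists_r=\neg\neg$ on $Reg_\forall(\mathbf{A})$, I need to verify $\Phi(\exists x)=\neg\neg \Phi(x)$: using $\forall \exists x=\exists x$ gives $\Phi(\exists x)=\exists x\vee \neg\neg\exists x=\neg\neg\exists x$, while $\neg\neg\Phi(x)=\neg\neg x\vee\neg\neg\forall x=\neg\neg x=\neg\neg\exists x$, where the last equality is the $\mathbb{W}_1$-identity $\neg\neg y\approx \neg\neg\exists y$.

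Finally, since a congruence on a monadic Gödel algebra is determined by its $1$-class (a monadic filter), I would identify $\ker\Phi$ by computing the set $\{a\in A:\Phi(a)=1\}=\{a\in A: a\vee \neg\neg\forall a=1\}$; in every chain this is precisely $\{a:\forall a>0\}=D_\forall$, and the chain-generation of $\mathbb{W}_1$ transfers the equality to all of $\mathbf{A}$. Thus $\ker\Phi$ coincides with the congruence of $D_\forall(\mathbf{A})$, and the first isomorphism theorem yields $\mathbf{Reg_\forall(A)}=\Phi(\mathbf{A})\cong \mathbf{A}/D_\forall(\mathbf{A})$. I expect the main obstacle to be the $\exists$-preservation step: the definition $\exists_r=\neg\neg$ on $\mathbf{Reg_\forall(A)}$ is engineered precisely so that this match holds, and the calculation hinges on the $\mathbb{W}_1$-specific identity $\neg\neg y\approx \neg\neg\exists y$, so the restriction to width one is essential.
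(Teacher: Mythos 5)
Your proof is correct and is essentially the paper's own argument: the identical map $a \mapsto a \vee \neg\neg\forall a$, shown to be a surjective homomorphism onto $\mathbf{Reg_\forall(A)}$ (with the $\wedge$, $\implica$ preservation identities verified on chains and the $\exists$-step resting on $\neg\neg x \approx \neg\neg\exists x$) whose kernel-filter is $D_\forall(\mathbf{A})$. The one point where you diverge is the kernel: the paper derives $a \vee \neg\neg\forall a = 1 \Rightarrow \neg\neg\forall a = 1$ by a direct two-line computation applying $\forall$ and using $\forall(x \vee y) = \forall x \vee \forall y$, whereas your transfer of a biconditional from chains to all of $\mathbb{W}_1$ is a quasi-identity transfer, which is not justified by variety generation alone but does go through here because every algebra in $\mathbb{W}_1$ is a subdirect product of monadic Gödel chains (its f.s.i.\ members are chains); it would be worth making that explicit.
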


\begin{proof}
We define the map $g\colon \mathbf{A} \to \mathbf{Reg_\forall (A)}$ by $g(a)= a \vee \neg\neg \forall a$. Let us see that $g$ is a homomorphism of $\mathbf{A}$ onto $\mathbf{Reg_\forall (A)}$ such that $g^{-1}(\{1\}) = D_\forall(\mathbf{A})$. Indeed, $g(a \vee b)=$ \linebreak $(a \vee b )\vee \neg\neg \forall (a \vee b )= (a \vee b )\vee \neg\neg  (\forall a \vee \forall b )= (a \vee \neg\neg \forall a)\vee (b \vee \neg\neg \forall b)=g(a)\vee g(b)$. It is easy to see that the following holds in monadic Gödel chains:	$(a \wedge b )\vee \neg\neg \forall (a \wedge b ) = (a \vee \neg\neg \forall a)\wedge (b \vee \neg\neg \forall b)$ and $(a \implica b )\vee \neg\neg \forall (a \implica b ) = (a \vee \neg\neg \forall a)\implica (b \vee \neg\neg \forall b)$. Then, clearly, $g(a \wedge b)= g(a)\wedge g(b)$ and $g(a \implica b)= g(a)\implica g(b)$. Also, $g(\forall a)= \forall a \vee \neg\neg \forall a = \forall(a \vee \neg\neg \forall a)=\forall g(a)$, and, $g(\exists a)= \exists a \vee \neg \neg \forall \exists a = \exists a \vee \neg \neg \exists a =\neg \neg \exists a = \neg \neg\exists a \vee \neg \neg \forall a = \neg \neg\exists (a \vee \neg \neg \forall a ) =\exists_r g(a)$. So $g$ is a homomorphism. Clearly $g$ is onto $\mathbf{Reg_\forall (A)}$, since $g(b)= b \vee \neg\neg \forall b = b \vee \forall b= b$ for $b\in Reg_\forall(A)$.

Finally, if $\neg \forall a =0$, then $g(a)=1$. And, if $1= a \vee \neg\neg \forall a$, then $1 = \forall(a \vee \neg \neg \forall a) = \forall a \vee \neg\neg \forall a = \neg\neg \forall a$, so $a \in D_\forall(\mathbf{A})$. Then, $g^{-1}(\{1\}) = D_\forall(\mathbf{A})$.
\end{proof}

\begin{cor}
If $\mathbf{A} \in \mathbb{W}_1$, then $\mathbf{A}/D_\forall(\mathbf{A})$ is semisimple.
\end{cor}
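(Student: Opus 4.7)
The plan is to chain together two facts already established in the paper. The preceding theorem gives an isomorphism $\mathbf{A}/D_\forall(\mathbf{A}) \cong \mathbf{Reg}_\forall(\mathbf{A})$, and the preceding lemma tells us that $\mathbf{Reg}_\forall(\mathbf{A}) \in \mathbb{H}^\exists_2 \cap \mathbb{W}_1$; in particular, $\mathbf{A}/D_\forall(\mathbf{A})$ lies in $\mathbb{H}^\exists_2$. So the corollary reduces to showing that every algebra in $\mathbb{H}^\exists_2$ is semisimple.

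The key observation is that, as remarked immediately after the definition of $\mathbb{H}^\exists_2$ in Section~\ref{sec: subvar loc fin}, the variety $\mathbb{H}^\exists_2$ is a discriminator variety: the explicit term $t(x,y,z) := (\forall((x \implica y)\wedge (y \implica x)) \wedge z)\vee (\neg\forall((x \implica y)\wedge (y \implica x)) \wedge x)$ realizes the ternary discriminator on every f.s.i.\ member of $\mathbb{H}^\exists_2$. Therefore I would invoke the classical fact that every discriminator variety is semisimple, i.e., its subdirectly irreducible members coincide with its simple members (see, e.g., Burris--Sankappanavar, Theorem IV.9.4). Since Birkhoff's theorem represents $\mathbf{A}/D_\forall(\mathbf{A})$ as a subdirect product of subdirectly irreducible algebras in $\mathbb{H}^\exists_2$, and these are all simple, we conclude that $\mathbf{A}/D_\forall(\mathbf{A})$ is semisimple.

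There is no serious obstacle here: the proof is really just a citation stitched between the previous theorem and the earlier discriminator remark. The only point that deserves care is to make sure $\mathbf{Reg}_\forall(\mathbf{A})$ lands in $\mathbb{H}^\exists_2$ (this is explicit in the preceding lemma because $\exists_r Reg_\forall(\mathbf{A}) = Reg(\mathbf{A})$ is Boolean, so the defining equation of $\mathbb{H}^\exists_2$ is satisfied) and to remember that semisimplicity of a variety is synonymous with every algebra being a subdirect product of simple algebras, which is exactly what discriminator varieties provide.
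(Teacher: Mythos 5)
Your proof is correct and matches the paper's intended argument: the paper leaves the corollary unproved but the surrounding text makes clear that it follows from $\mathbf{A}/D_\forall(\mathbf{A})$ lying in the discriminator variety $\mathbb{H}^\exists_2$ (a fact the paper also establishes directly, without the Glivenko theorem, in the paragraph where $D_\forall$ is introduced), and discriminator varieties are semisimple. Your route through $\mathbf{Reg}_\forall(\mathbf{A})$ is an equally valid way to land in $\mathbb{H}^\exists_2$.
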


In summary we have shown that for every $\mathbf{A} \in \mathbb{W}_1$ the quotient algebra $\mathbf{A}/D_\forall(\mathbf{A})$ belongs to $\mathbb{H}_2^\exists \cap \mathbb{W}_1$. Since $\mathbb{H}_2^\exists$ is a discriminator variety, we have that $\mathbf{A}/D_\forall(\mathbf{A})$ is a Boolean product of simple monadic Gödel chains.

%
%

\subsection{Free algebras in $\mathbb{W}_1$}


Having a clear description of free algebras in a variety greatly improves the understanding of the structures at hand. We devote the last part of this section to characterize the free algebra $\mathbf{F}(n)$ in $\mathbb{W}_1$ with $n$ generators.

First, we state some results for any finite monadic Gödel algebra. Let $\mathbf{A}\in \mathbb{W}_1$ be a \emph{finite} algebra. We denote by $\Pi(\mathbf{A})$ the family of join-irreducible elements of $\mathbf{A}$. We know that $P\in \X(\mathbf{A})$ if and only if $P=[p)$ is the filter generated by an element $p \in \Pi(\mathbf{A})$. Let us recall that $\langle\X(\mathbf{A}),\subseteq\rangle$ is dually isomorphic to the ordered set $\langle\Pi(\mathbf{A}), \leq\rangle$. The equivalence relation $E$ defined on $\X(\mathbf{A})$ (see \secref{sec:espacioPriestleyMG}) naturally induces an equivalence relation on $\Pi(\mathbf{A})$ which we will also denote by $E$, by an abuse of notation. Note that if $p,q\in \Pi (\mathbf{A})$, $(p,q) \in E$ iff $[p) \cap \exists A = [q) \cap \exists A$ iff $\exists p= \exists q$. We will use the set $\langle\Pi(\mathbf{A}), E\rangle$ instead of $\langle\X(\mathbf{A}), E\rangle$.
Recall that in any Gödel algebra the family of prime filters which contain a prime filter is a chain. Thus, if $p\in \Pi(\mathbf{A})$ then $(p] \cap \Pi(\mathbf{A})$ is a chain. In the sequel we write $(p]_\Pi$ instead of $(p] \cap \Pi(\mathbf{A})$. Note also that $(p]_\Pi = (p] - \{0\}$.

We say that $p\in \Pi(\mathbf{A})\cap \exists A$ has {\em coordinates} $(m,m_0,\ldots,m_r)$ if \[(p]_\Pi=\{p_1, \ldots, p_{m_0}, p_{m_0+1}, p_{m_0+2},\ldots, p_{m_0+m_1+1}, p_{m_0+m_1+2}, \ldots, p_{m+1}=p\}\] has $m+1$ elements and \[(p]_\Pi \cap \exists A = \{p_{m_0+1}, p_{m_0+m_1+2}, \ldots, p_{m+1}=p\}.\] That is, $(p]_\Pi$ is a chain with $m+1$ elements like  \[(p]_\Pi=\{ \underbrace{p_1, \ldots, p_{m_0}}_{\text{not in } \exists A}, \underbrace{p_{m_0+1}}_{\text{in } \exists A}, \underbrace{p_{m_0+2},\ldots, p_{m_0+m_1+1}}_{m_1 \text{ elements not in }\exists A}, \underbrace{p_{m_0+m_1+2}}_{\text{in }\exists A}, \ldots, \underbrace{p_{m+1}=p}_{\text{in } \exists A}\}.\] 

\begin{remark} \label{OBS: p y coordenadas determinan lo de abajo de p}
If $\bl A \in \mathbb{W}_1$ is finite and $p\in \Pi(\mathbf{A})\cap \exists A$ has coordinates $(m,m_0,\ldots,m_r)$, then $\bl A/[p)\cong \bl C_{(m,m_0,\ldots,m_r)}$ (recall the definition of $\bl C_{(m,m_0,\ldots,m_r)}$ from Section 4.1). Indeed, \linebreak $h\colon \mathbf{A} \to \bl C_{(m,m_0,\ldots,m_r)}$ defined by
\[h(x)= \begin{cases}
       1 & \text{ if } x\in [p), \\ 
       a_i & \text{ if } x\in [p_i)\setminus[p_{i+1}), 1\leq i\leq m, \\
       0 & \text{ if } x\notin [p_1),
       \end{cases}\]
is a surjective homomorphism such that $[p)= h^{-1}(\{1\})$. Observe that the coordinates of a prime $p \in \Pi(\mathbf{A}) \cap \exists A$ fully determine the number of prime elements in $\mathbf{A}$ below $p$ as well as which of them belong to $\exists A$.
\end{remark}

\begin{lem} \label{propiedades_de_Pi(A)}
For every finite algebra $\mathbf{A}$ in $\mathbb{W}_1$  we have that:
\begin{enumerate}[\rm (i)]


\item If $p \in \Pi(\mathbf{A})$, then $\exists p \in \Pi(\mathbf{A})$.

\item $\Pi(\exists \mathbf{A}) = \exists \Pi (\mathbf{A}) = \Pi(\mathbf{A})\cap \exists A$,

\item $\max \Pi(\mathbf{A}) =\max \exists \Pi (\mathbf{A})$,  

\end{enumerate} 
\end{lem}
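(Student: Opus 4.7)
My plan is to prove the three items in order, leaning on two tools: first, the identity $\forall(x\vee y)\approx \forall x\vee\forall y$ that characterizes $\mathbb{W}_1$; and second, the equivalence between join-irreducibility and join-primality in any finite distributive lattice. I will also use freely that $\forall c=\exists c=c$ for $c\in\exists A$, and that $\exists\mathbf{A}$ is a subalgebra of $\mathbf{A}$.

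For (i), take $p\in\Pi(\mathbf{A})$ and suppose $\exists p = x\vee y$ for some $x,y\in A$. Applying $\forall$ and using the $\mathbb{W}_1$ identity together with $\forall\exists p=\exists p$ yields $\exists p = \forall x\vee\forall y$, where both summands lie in $\exists A$ and are bounded above by $\exists p$. Since $p$ is join-prime in $\mathbf{A}$ and $p\leq\exists p=\forall x\vee\forall y$, I may assume $p\leq\forall x$; then $\exists p\leq\exists\forall x=\forall x\leq x\leq\exists p$, forcing $x=\exists p$, so $\exists p$ is join-irreducible.

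For (ii), the equality $\exists\Pi(\mathbf{A})=\Pi(\mathbf{A})\cap\exists A$ is immediate from (i) together with the fact that $c=\exists c$ for $c\in\exists A$. The equality $\Pi(\exists\mathbf{A})=\Pi(\mathbf{A})\cap\exists A$ splits into two arguments: the inclusion $\supseteq$ is direct, for if $c\in\Pi(\mathbf{A})\cap\exists A$ and $c=c_1\vee c_2$ with $c_i\in\exists A$, then join-irreducibility of $c$ in $\mathbf{A}$ already forces $c=c_1$ or $c=c_2$; the inclusion $\subseteq$ reuses the $\forall$-trick from (i), pushing a hypothetical decomposition $c=x\vee y$ in $\mathbf{A}$ down to $c=\forall x\vee\forall y$ in $\exists A$, invoking join-irreducibility of $c$ in $\exists\mathbf{A}$ to conclude $c=\forall x$ (say), and then deducing $x=c$ as in (i).

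Finally, (iii) is a formal consequence of (i) and (ii): if $p\in\max\Pi(\mathbf{A})$ then $p\leq\exists p\in\Pi(\mathbf{A})$ forces $p=\exists p\in\exists\Pi(\mathbf{A})$, and $p$ remains maximal there since $\exists\Pi(\mathbf{A})\subseteq\Pi(\mathbf{A})$; conversely, if $c\in\max\exists\Pi(\mathbf{A})$ and some $q\in\Pi(\mathbf{A})$ strictly dominated $c$, then $\exists q\in\exists\Pi(\mathbf{A})$ would strictly dominate $c$ as well, contradicting maximality. The main obstacle is recognizing that the $\mathbb{W}_1$ axiom is genuinely required to transfer join-irreducibility from $\exists\mathbf{A}$ back up to $\mathbf{A}$, both in (i) and in the $\subseteq$ half of (ii); without it, a diagonal-$\exists$ structure on $\mathbf{C}_3\times\mathbf{C}_3$ already produces a finite monadic Gödel algebra in which $\exists A$ and $\Pi(\mathbf{A})$ are disjoint, so one must resist the temptation to argue only with $\exists(x\vee y)=\exists x\vee\exists y$, which is available in every monadic Gödel algebra.
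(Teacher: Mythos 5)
Your proof is correct and follows essentially the same route as the paper: apply $\forall$ to a putative decomposition, use the $\mathbb{W}_1$ identity $\forall(x\vee y)=\forall x\vee\forall y$ together with join-irreducibility (the paper meets with $p$ and invokes distributivity where you invoke join-primality, which is the same thing in a finite distributive lattice), and close with the sandwich $\exists p\leq\forall x\leq x\leq\exists p$; items (ii) and (iii) are then handled exactly as in the paper. Your closing observation that the $\mathbb{W}_1$ axiom is genuinely needed, with the diagonal quantifier on $\mathbf{C}_3\times\mathbf{C}_3$ as a counterexample, is a correct and welcome sanity check, though not part of the paper's argument.
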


\begin{proof}
\begin{enumerate}[(i)]

\item Let $p \in \Pi(\mathbf{A})$ and let $a, b\in A$ such that $\exists p = a \vee b$. Then $\exists p = \forall(a \vee b) = \forall a \vee \forall b$, so $p = p \wedge \exists p = (p \wedge \forall a) \vee (p \wedge \forall b)$. Since $p$ is an irreducible element, $p = p \wedge \forall a$ or $p = p\wedge \forall b$. Suppose that $p = p \wedge \forall a$. Then $p \leq \forall a$, and so $\exists p \leq \forall a \leq a \leq \exists p$. In consequence, $\exists p = a$. The other case is similar.
	
\item The inclusion $\exists \Pi (\mathbf{A}) \subseteq \Pi(\mathbf{A})\cap \exists A$ follows from the previous item, and $\Pi(\mathbf{A})\cap \exists A \subseteq \Pi(\exists\mathbf{A})$ is trivial. It remains to show that $\Pi(\exists \mathbf{A}) \subseteq \exists \Pi(\mathbf{A})$. Let $a \in \Pi(\exists \mathbf{A})$. Since $\exists a = a$, it suffices to show that $a \in \Pi(\mathbf{A})$. Let $b, c\in A$ such that $a = b \vee c$. Then $a = \forall a = \forall(b \vee c) = \forall b \vee \forall c$. Thus, $a = \forall b$ or $a =\forall c$. If $a = \forall b$, then $a \leq b \leq b \vee c = a$, and consequently $a = b$. Similarly, if $a=\forall c$, then $a = c$. 

\item If $m \in \max \Pi(\mathbf{A})$, then $\exists m \in \Pi(\mathbf{A})$ by (i). Since $m\leq \exists m$, from the maximality of $m$, we have that $m = \exists m$. On the other hand, if $m\in \max \exists \Pi(\mathbf{A})$ and there is $m'\in \Pi(\mathbf{A})$ such that $m\leq m'$, then $m\leq m'\leq \exists m'$ and from the maximality of $m$ we have that $m= \exists m'$ and so $m = m'$. \qedhere
\end{enumerate}
\end{proof}
  

\begin{remark} \label{OBS: de EPi a Pi}
If we know the poset $\exists \Pi(\mathbf{A})$ and the coordinates of its maximal elements, we can fully determine $\langle\Pi(\mathbf{A}), E\rangle$. Indeed, since $\max \Pi(\mathbf{A}) = \max \exists \Pi(\mathbf{A})$, every $p \in \Pi(\mathbf{A})$ lies below a maximal prime element $m \in \exists \Pi(\mathbf{A})$. Moreover, the coordinates of $m$ completely describe the chain of prime elements below it, including which of those elements belong to $\exists A$ (see Remark \ref{OBS: p y coordenadas determinan lo de abajo de p}). Thus the values of $\exists$ on $\Pi(\mathbf{A})$ are immediate and the equivalence relation $E$ is then easily calculated because $(p,q) \in E$ if and only if $\exists p = \exists q$. 

For example, suppose $\exists \Pi(\mathbf{A})$ is the poset shown in Figure \ref{fig:ejemplo} $(a)$ and assume the coordinates of the maximal elements $m_1, m_2, m_3$ are $(4,1,2)$, $(4,1,1,0)$ and $(4,1,1,0)$, respectively. Then the poset $\Pi(\mathbf{A})$ is the one given in Figure 1 $(b)$, where the elements of $\exists \Pi(\mathbf{A})$ are highlighted. The equivalence relation $E$ is now evident and shown in Figure 1 $(c)$. 

\begin{figure}[h]
\begin{center}
\begin{multicols}{3}
\includegraphics[scale=0.5]{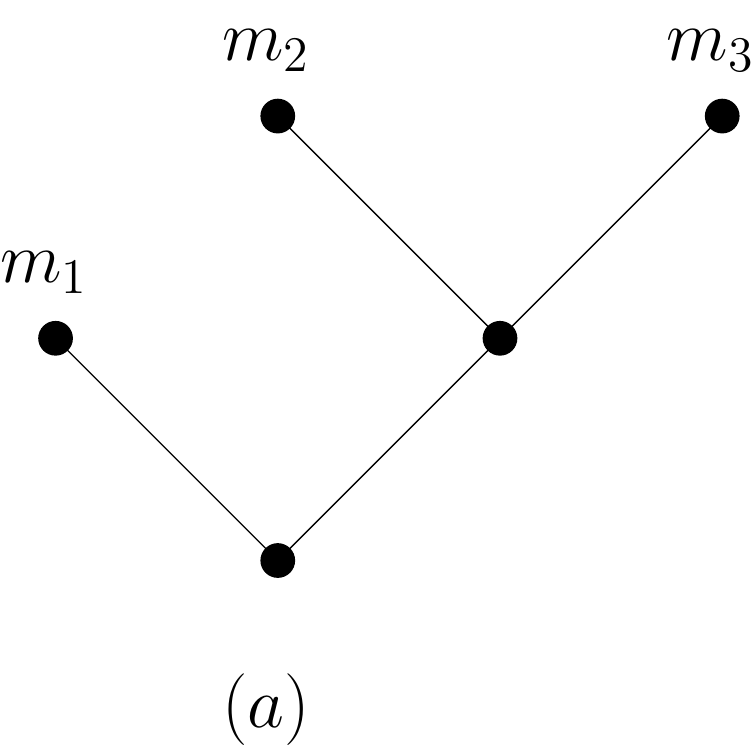}
\includegraphics[scale=0.5]{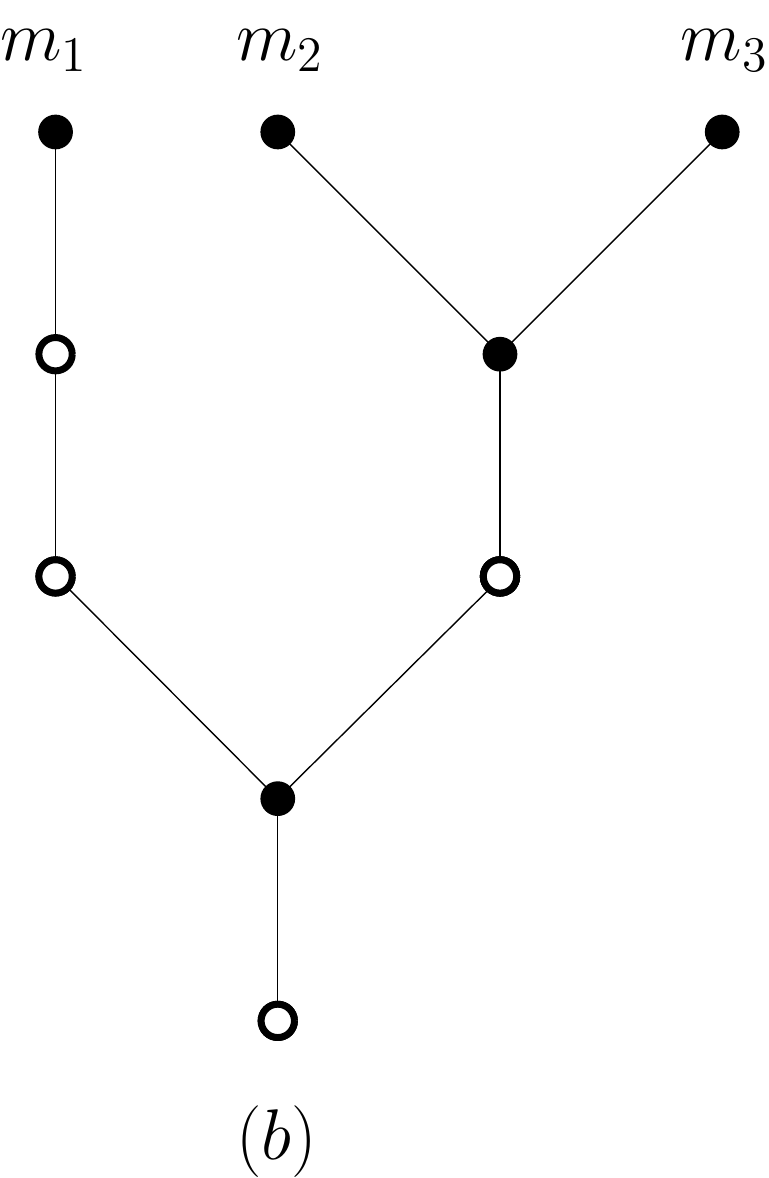}
\includegraphics[scale=0.5]{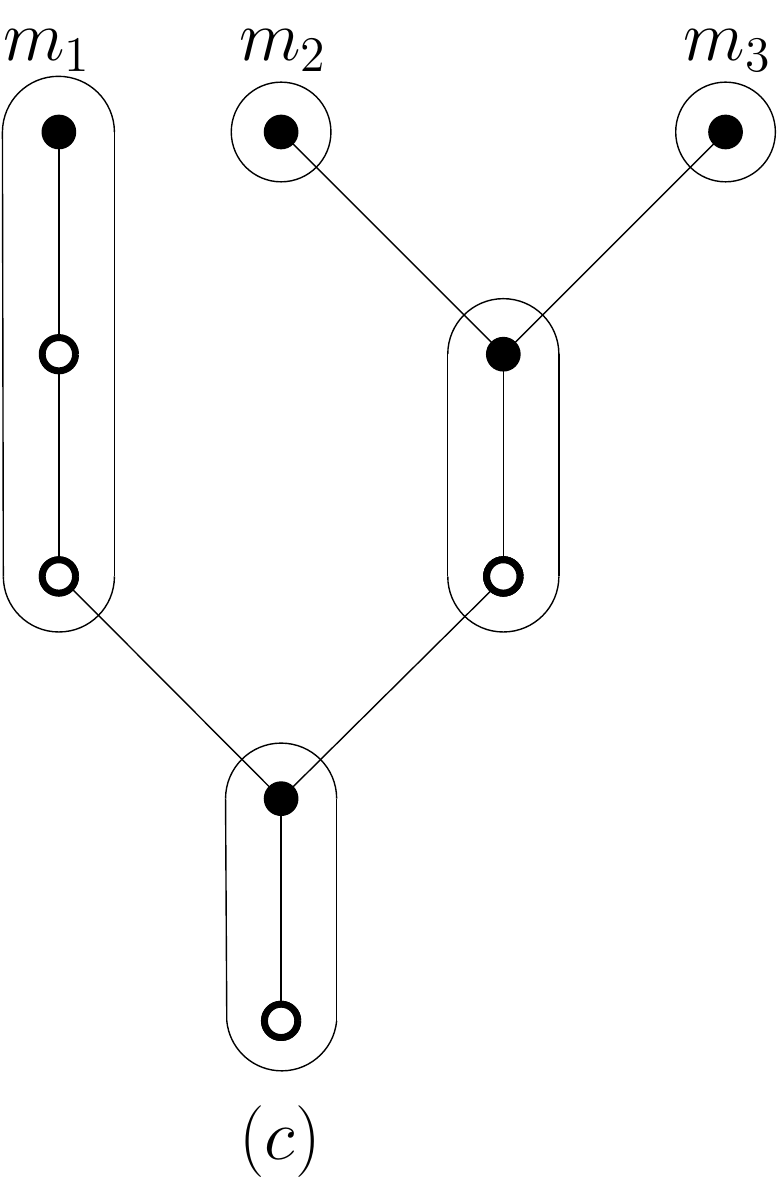}
\end{multicols}
\caption{From $\exists \Pi(\mathbf{A})$ to $\langle\Pi(\mathbf{A}),E\rangle$}
\label{fig:ejemplo}
\end{center}
\end{figure}
\end{remark}

In what follows we characterize the free algebra $\mathbf{F}(n)$ in $\mathbb{W}_1$ with $n$ generators using the procedure described in the last remark. We build the ordered set $\langle\Pi(\mathbf{F}(n)), E\rangle$ of its join-irreducible elements together with the equivalence relation $E$ that determines the quantifiers from the ordered set $\exists \Pi(\mathbf{F}(n))$ and the coordinates of its maximal elements. For the sake of simplicity we write $\Pi(n)$ instead of $\Pi (\mathbf{F}(n)) $ and $\exists \Pi(n)$ instead of $\exists \Pi (\mathbf{F}(n)) $. 

Let $F_{(m,m_0, \ldots, m_{r})}$ be the set of all functions $f$ from the set $G$ of free generators of $ \mathbf F(n)$ into $C_m$ such that the subalgebra generated by $f(G)$ is $\mathbf{C}_{(m, m_0, \ldots, m_{r})}$. If $f\in F_{(m,m_0, \ldots, m_{r})} $, then $f$ can be extended to a unique surjective homomorphism $\bar{f}\colon \mathbf F(n) \to \mathbf{C}_{(m, m_0, \ldots, m_{r})}$ and it is known that $\bar{f}^{-1}(\{1\})=[p_f)$ is a monadic prime filter of $ \mathbf F(n)$  where $p_f\in \exists \Pi(n)$. Moreover, $p_f$ has coordinates $(m,m_0, \dots, m_r)$. On the other hand, if $p\in \exists\Pi(n)$ is such that $\mathbf{F}(n)/[p)\cong \mathbf{C}_{(m, m_0, \ldots, m_{r})}$ and we consider the canonical map $h\colon \mathbf{F}(n)\to \mathbf{F}(n)/[p)$ and the restriction $f=h\restriction_G$, then clearly $f\in F_{(m,m_0, \dots, m_{r})} $, $\bar f= h$ and $p_f=p$. Therefore, there is a bijection between the set $F_{(m,m_0, \ldots, m_{r})}$ and the elements of $\exists \Pi(n)$ with coordinates $(m,m_0, \ldots, m_{r})$. 

Now we want to characterize functions $f\colon G\to C_m$ whose image generates $\mathbf{C}_{(m, m_0, \ldots, m_{r})}$. Let $f\colon G\to C_m$ be such that the subalgebra generated by $f(G)$ is $\mathbf{C}_{(m, m_0, \ldots, m_{r})}$, that is, \linebreak $f(G)\cup \exists f(G)\cup \forall f(G)\cup \{0,1\}= C_m$. Note that, if $|G| = n$, then $m \leq 3n$.

Let \[\exists C_m=\{b_0=0, b_1, b_2, \ldots, b_r, b_{r+1}=1\}\] and let $M$ be the subset of $\exists C_m$ of those elements whose predecessor and successor elements in $C_m$ are both in $\exists C_m$. That is, 
\[M= \{b_j\in \exists C_m: m_{j-1}=m_j=0, 1\leq j\leq r\}.\]

Then, $f(G)$ generates the chain $ \mathbf{C}_{(m, m_0, \ldots, m_{r})}$ if and only if $M\cup \left(C_m\setminus\exists C_m\right) \subseteq f(G)$. So, $|f(G)| \geq |M| + \displaystyle\sum_{i=0}^{r}m_i=|M| + m-r$.

For each $m$, $0\leq m \leq 3n$, let us consider the sets 
\[I_m(n)=\{(m_0, \ldots, m_r): \sum_{i=0}^{r}m_i=m-r \text{ and } m-r+ |\{j: m_{j-1}=m_j=0 \}|\leq n\}.\]

Observe that $|I_m(n)|$ is the number of nonisomorphic chains with $m+2$ elements that can be generated by a set of $n$ generators.

Let $\Lambda (n) = \displaystyle\bigcup_{m=0}^{3n}\bigcup_{(m_0,\dots,m_r)\in I_m(n)} F_{(m,m_0,\dots,m_r)}$. If $f \in \Lambda (n)$, then there is a unique \linebreak $(m_0,\dots,m_r)\in I_m(n)$ such that $f\in F_{(m,m_0,\dots,m_r)}$. Moreover, we have an injection from $\Lambda (n)$ onto $\exists \Pi(n)$ given by $f\mapsto p_f$. Then, each element of $\exists \Pi(n)$ can be represented by an element of $\Lambda (n)$. From the above results we have the following.

\begin{lem} $|\exists \Pi(n)| = \displaystyle\sum_{m=0}^{3n}\sum_{(m_0, \dots, m_r)\in I_m(n)} |F_{(m,m_0, \ldots, m_{r})}|$. 
\end{lem}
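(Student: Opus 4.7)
The plan is to show the identity by partitioning $\exists\Pi(n)$ according to coordinates and then invoking the bijection with the sets $F_{(m,m_0,\ldots,m_r)}$ that was established in the paragraph preceding the lemma.

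First, I would fix $p \in \exists\Pi(n)$ and show that its coordinates $(m,m_0,\ldots,m_r)$ necessarily satisfy $m \leq 3n$ and $(m_0,\ldots,m_r) \in I_m(n)$. For this, pick any $f \in \Lambda(n)$ with $p_f = p$ (such an $f$ exists by the surjection $f \mapsto p_f$ onto $\exists\Pi(n)$ discussed above), so $\bar{f}\colon \mathbf{F}(n) \to \mathbf{C}_{(m,m_0,\ldots,m_r)}$ is surjective and $f(G)$ generates $\mathbf{C}_{(m,m_0,\ldots,m_r)}$. Since $\mathbf{C}_{(m,m_0,\ldots,m_r)}$ has $m+2$ elements and is generated by the at most $n$ elements of $f(G)$ (together with $\exists, \forall, \vee, \wedge, \implica$), a rough counting argument of the form already given in the paragraph preceding the lemma yields both $m \leq 3n$ and the inequality $m - r + |\{j : m_{j-1} = m_j = 0\}| \leq n$ that defines membership in $I_m(n)$.

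Next, I would use the fact, established just before the lemma, that the assignment $f \mapsto p_f$ restricts to a bijection between $F_{(m,m_0,\ldots,m_r)}$ and the set $\exists\Pi(n)_{(m,m_0,\ldots,m_r)}$ of elements $p \in \exists\Pi(n)$ whose coordinates are exactly $(m,m_0,\ldots,m_r)$. Combined with the previous step, this gives
\[
\exists\Pi(n) = \bigsqcup_{m=0}^{3n}\ \bigsqcup_{(m_0,\ldots,m_r)\in I_m(n)} \exists\Pi(n)_{(m,m_0,\ldots,m_r)},
\]
a disjoint decomposition since coordinates are uniquely determined by $p$ (the chain $(p]_\Pi$ and its intersection with $\exists F(n)$ are intrinsic to $p$).

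Finally, taking cardinalities and applying the bijection $F_{(m,m_0,\ldots,m_r)} \leftrightarrow \exists\Pi(n)_{(m,m_0,\ldots,m_r)}$ term by term yields
\[
|\exists\Pi(n)| = \sum_{m=0}^{3n}\sum_{(m_0,\ldots,m_r)\in I_m(n)} |\exists\Pi(n)_{(m,m_0,\ldots,m_r)}| = \sum_{m=0}^{3n}\sum_{(m_0,\ldots,m_r)\in I_m(n)} |F_{(m,m_0,\ldots,m_r)}|,
\]
which is the desired formula. There is no real obstacle here; the whole content of the statement is the bookkeeping in the first step, namely verifying that the coordinate tuple of any $p \in \exists\Pi(n)$ really does lie in some $I_m(n)$ with $m \leq 3n$, and this is immediate from the characterization of generating sets for $\mathbf{C}_{(m,m_0,\ldots,m_r)}$ derived above.
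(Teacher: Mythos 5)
Your proposal is correct and is essentially the argument the paper intends: the lemma is stated immediately after the paper establishes the bijection between $F_{(m,m_0,\ldots,m_r)}$ and the elements of $\exists\Pi(n)$ with those coordinates, together with the constraints $m\leq 3n$ and $(m_0,\ldots,m_r)\in I_m(n)$, and the paper simply says ``From the above results we have the following.'' Your write-up just makes explicit the disjoint decomposition by coordinates and the term-by-term cardinality count that the paper leaves implicit.
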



\begin{example} \label{EJ: funciones para n = 1}
For $n = 1$ we have $I_0(1) = \{(0)\}$, $I_1(1) = \{(1), (0,0)\}$, $I_2(1) = \{(1,0), (0,1)\}$, and $I_3(1) = \{(0,1,0)\}$.

$F_{(0,0)}$ is the set of functions from $\{g\}$ (set of free generators) into $C_0 = \{a_0,a_1\}$ whose images generate the algebra $\mathbf{C}_{(0,0)}$. There are two choices for the image of $g$ in this case: $a_0$ and $a_1$. We represent those functions by $(0,0;a_0)$ and $(0,0;a_1)$, writing the value of the functions on $g$ after the semicolon. Thus $F_{(0,0)} = \{(0,0;a_0),(0,0;a_1)\}$. In a similar way we can see that $F_{(1,1)} = \{(1,1;a_1)\}$, $F_{(1,0,0)} = \{(1,0,0;a_1)\}$, $F_{(2,1,0)} = \{(2,1,0;a_1)\}$, $F_{(2,0,1)} = \{(2,0,1;a_2)\}$, $F_{(3,0,1,0)} = \{(3,0,1,0;a_2)\}$. 

Consequently $|\exists \Pi(1)| = \displaystyle\sum_{m=0}^{3}\sum_{(m_0, \dots, m_r)\in I_m(1)} |F_{(m,m_0, \ldots, m_{r})}|= 
|F_{(0,0)}|+|F_{(1,1)}|+|F_{(1,0,0)}|+|F_{(2,1,0)}|+|F_{(2,0,1)}|+|F_{(3,0,1,0)}|=7$. 
\end{example}

We say that $q$ covers $p$ if $p<q$ and $p\leq r < q$, implies $p=r$. 

\begin{remark} \label{poset existsPi}
In $\exists \Pi (n)$, $q$ covers $p$ if and only if $p<q$, $p$ has coordinates $(m,m_0, \dots, m_r)$, with $(m_0,\dots,m_r)\in I_m(n)$, and
$q$ has coordinates $(m+m_{r+1}+1,m_0, \dots, m_r, m_{r+1})$, with $(m_0,\dots,m_r,m_{r+1})\in I_{m+m_{r+1}+1}(n)$.
\end{remark}

In particular, from \remarkref{poset existsPi}, we have that $p$ is minimal in the set $\exists \Pi (n)$ if and only if $p$ has coordinates $(m,m)$, with $0\leq m\leq n$.

 \thmref{hcoverf} allows us to construct the ordered set $\exists \Pi(n)$ and determine the coordinates of all its elements. We follow a similar argument given in the proofs of \cite[Theorem 3.14]{AbadMonteiro} and \cite[Theorem 3.10]{rueda01}.
 
\begin{thm}\label{hcoverf} Let $f$, $h\in \Lambda (n)$. Then $p_h$ covers
$p_f$ in $\exists \Pi (n)$ if and only if $f \in F_{(m, m_0, \dots , m_r)}$, where $0 \leq m \leq 3n-1$ and $(m_0, \dots , m_r) \in I_m(n)$, and $h \in F_{(m+m_{r+1}+1,m_0, \dots, m_r, m_{r+1})}$, where $m+m_{r+1}+1\leq 3n$ and $(m_0, \dots , m_r,m_{r+1}) \in I_{m+m_{r+1}+1}(n)$, and, for $g\in G$ the following conditions hold:
\begin{enumerate}[(a)]
\item\label{thm_condition1} $f(g) = a_i$ if and only if $h(g) = a_i$, $0\leq i \leq  m$.
\item\label{thm_condition2} $f(g) =1=a_{m+1}$ if and only if $h(g) = a_i$, $m+1\leq i\leq m+m_{r+1}+1$.
\end{enumerate}
 \end{thm}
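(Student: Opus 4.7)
My plan is to translate the cover relation ``$p_h$ covers $p_f$ in $\exists\Pi(n)$'' into a factorization of $\bar f$ through $\bar h$, and then read off the on-generator conditions. The coordinate part of the theorem will come for free from \remarkref{poset existsPi}; what remains is the on-generator content of (a) and (b).

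For the forward direction, I would assume $p_h$ covers $p_f$. By \remarkref{poset existsPi}, $f$ and $h$ must have the stated coordinates. The strict inequality $p_f<p_h$ is equivalent to $[p_h)\subsetneq [p_f)$, which yields a unique surjective homomorphism $\pi\colon \mathbf C_{(m+m_{r+1}+1,m_0,\dots,m_r,m_{r+1})}\to \mathbf C_{(m,m_0,\dots,m_r)}$ satisfying $\bar f=\pi\circ\bar h$. A direct inspection identifies $\pi$ as the quotient by the monadic filter $[a_{m+1})$ of the $h$-chain (note that $a_{m+1}$ coincides with the marker $b_{r+1}$ of the $h$-chain, so it is an $\exists$-element and the filter is closed under $\forall$): thus $\pi(a_i)=a_i$ for $0\le i\le m$ and $\pi(a_j)=1=a_{m+1}$ for $m+1\le j\le m+m_{r+1}+2$. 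Evaluating $\bar f(g)=\pi(\bar h(g))$ at each generator $g\in G$ now gives precisely conditions (a) and (b).

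For the backward direction, I would take $f,h$ with the stated coordinates satisfying (a) and (b), and define $\pi$ as above. Conditions (a) and (b) say exactly that $\pi(h(g))=f(g)$ for every $g\in G$, so $\pi\circ\bar h$ and $\bar f$ agree on the free generators and therefore as homomorphisms on all of $\mathbf F(n)$. Consequently $[p_h)=\bar h^{-1}(\{1\})\subseteq \bar f^{-1}(\{1\})=[p_f)$, giving $p_f\le p_h$; since the coordinates of $p_f$ and $p_h$ differ, $p_f\ne p_h$, so $p_f<p_h$, and \remarkref{poset existsPi} together with the matching coordinates yields that $p_h$ covers $p_f$.

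The main technical step I expect to need care is the explicit identification of $\pi$: one has to verify that $[a_{m+1})$ is a monadic filter of the $h$-chain and that the resulting quotient is isomorphic, as a monadic Gödel algebra, to $\mathbf C_{(m,m_0,\dots,m_r)}$. Both facts hinge on the observation that $a_{m+1}=b_{r+1}$ is an $\exists$-element of the $h$-chain, from which closure of $[a_{m+1})$ under $\forall$ is immediate and the $\exists$-structure of the quotient is read off directly from the placement of the markers $b_0,\dots,b_r,b_{r+1}$. The translation between the abstract factorization $\bar f=\pi\circ\bar h$ and the on-generator conditions then reduces to tabulating the preimages $\pi^{-1}(a_i)$, which is routine.
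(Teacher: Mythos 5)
Your proof is correct and, in the converse direction, takes a genuinely different (and cleaner) route than the paper. The paper's forward direction is essentially yours: it writes out the chain $p_1<\dots<p_{m+1}=p_f<\dots<p_{m+m_{r+1}+2}=p_h$ and the explicit formulas for $\bar f$ and $\bar h$, which is just the unpacked form of your factorization $\bar f=\pi\circ\bar h$. For the converse, however, the paper does not use the factorization: it builds an explicit family of sets $S_0,\dots,S_{m+m_{r+1}+2}$ cut out from the filters $[p_i)$ and $[q_j)$, checks (``routinely'') that their union is a subalgebra of $\mathbf F(n)$ containing $G$, hence all of $F(n)$, and deduces $[p_h)\subseteq[p_f)$ from that. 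Your argument replaces this with the universal property of the free algebra: conditions (a)--(b) say $\pi\circ\bar h$ and $\bar f$ agree on $G$, hence everywhere, so $[p_h)=\bar h^{-1}(\{1\})\subseteq(\pi\circ\bar h)^{-1}(\{1\})=[p_f)$. This buys you a shorter proof with the only real work concentrated in one clean lemma (that $[a_{m+1})$ is a monadic filter of the $h$-chain with quotient $\mathbf C_{(m,m_0,\dots,m_r)}$, which holds because $a_{m+1}=b_{r+1}\in\exists C$), at the cost of nothing; the paper's subalgebra construction is more hands-on but requires the unproved ``routine'' verification. One small caveat, which is not a defect of your argument: your $\pi$ sends $a_j$ to $1$ for $m+1\le j\le m+m_{r+1}+2$, so the condition you actually derive in (b) lets $i$ range up to $m+m_{r+1}+2$ (i.e., it includes $h(g)=1$), whereas the statement stops at $m+m_{r+1}+1$; the paper's own explicit formulas for $\bar f$ and $\bar h$ yield the same wider range, so this is an off-by-one in the statement rather than in either proof, but you should flag it rather than assert that the computation gives ``precisely'' condition (b) as written.
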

\begin{proof}
 If $p_h$ covers $p_f$ in  $\exists \Pi (n)$, then 
$p_1<\ldots <p_m<p_{m+1}=p_f<\ldots<p_{m+m_{r+1}+2}=p_h$ in $\Pi (n)$ and the natural homomorphisms $\bar{f}$, $\bar{h}$ are defined in the following way:

\[\bar{f}(x)= \begin{cases}
       1 & \text{ if } x\in [p_{m+1}), \\ 
       a_i & \text{ if } x\in [p_i)\setminus[p_{i+1}), 1\leq i\leq m, \\
       0 & \text{ if } x\notin [p_1),
       \end{cases}\]
       
\[\bar{h}(x)= \begin{cases}
       1 & \text{ if } x\in [p_h), \\ 
       a_i & \text{ if } x\in [p_i)\setminus[p_{i+1}), 1\leq i\leq m+m_{r+1}+1, \\
       0 & \text{ if } x\notin [p_1).
       \end{cases}\]


In particular, we have that  $f \in F_{(m, m_0, \dots , m_r)}$, $h \in F_{(m+m_{r+1}+1,m_0, \dots, m_r, m_{r+1})}$ and conditions (\itemref{thm_condition1}) and (\itemref{thm_condition2}) hold.

Conversely, let $f \in F_{(m, m_0, \dots , m_r)}$, $h \in F_{(m+m_{r+1}+1 , m_0,\dots , m_{r+1})}$ satisfying (a) and (b). Then, $p_f$ has coordinates $(m, m_0, \dots , m_r) $ and $ p_h$ has coordinates $(m+m_{r+1}+1, m_0, \dots , m_r, m_{r+1})$. From Remark \ref{poset existsPi}, we need to prove that $p_f < p_h$.

Consider in $\Pi(n)$ 
\[p_1 <  \ldots < p_m < p_{m+1}= p_f\]
and
\[q_1 < \ldots < q_m < q_{m+1} < \ldots < q_{m+m_{r+1}+2} =p_h\]
the chains  $(p_f]$ and $(p_h]$
respectively, and the following sets:
\[S_{m+m_{r+1}+2} = [p_h) \cap [p_f),\]
\[S_{m+j} = ([q_{m+j}) \setminus [q_{m+j+1})) \cap [p_{m+1}), 1\leq j \leq m_{r+1}+1,\]
\[S_i = ([q_i) \setminus [q_{i+1})) \cap ([p_i) \setminus [p_{i+1})), \  1\leq i \leq m,\]
\[S_0 = F(n)  \setminus ([q_1)\cup [p_1)).\]
Then
\begin{center}
$a \in S_{m+m_{r+1}+2}$ if and only if $\bar{h}(a) = \bar{f}(a) = 1$,
\end{center}
\begin{center}
$a \in S_{m+j}$ if and only if $\bar{h}(a) = a_{m+j}$ and $\bar{f}(a) = 1$, $1\leq j \leq m_{r+1}+1$,
\end{center}
\begin{center}
$a \in S_i$ if and only if $\bar{h}(a) = \bar{f}(a) =a_i$, $0\leq i \leq m$.
\end{center}

It is a routine matter to show that if $S := \bigcup_{k=0}^{m+m_{r+1}+2} S_k$, then $\mathbf{S}$ is a subalgebra of $\mathbf{F}(n)$ and $G \subseteq S$. Consequently $\mathbf{S}= \mathbf{F}(n)$. Then we can write, $[p_h) = [p_h) \cap F(n)=$ \linebreak $[p_h) \cap (\bigcup_{i=0}^{m+m_{r+1}+2}S_k) = [p_h) \cap [p_f)$. Since $\;p_h \neq p_f\;$, we have  $\;p_f < p_h.$
\end{proof} 

\thmref{hcoverf} induces an order in $\Lambda (n)$ isomorphic to that of $\exists \Pi(n)$.

\begin{example}
We already calculated the elements of $\Lambda(1)$ in Example \ref{EJ: funciones para n = 1}. Using \thmref{hcoverf} we can build the corresponding poset, which is shown in \figref{fig:irreduciblesexiste1}. 

\begin{figure}[h]
\begin{minipage}{0.5\linewidth}
\hfill
\includegraphics[scale=0.6]{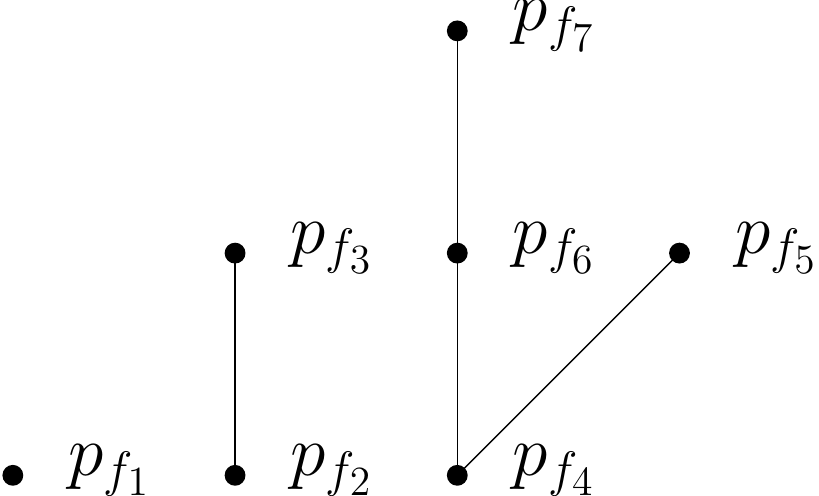}
\end{minipage}
\begin{minipage}{0.5\linewidth}
\hspace{1cm}
\begin{tabular}{ll}
$f_1 := (0,0;a_0)$ \\
$f_2 := (1,1;a_1)$ \\
$f_3 := (2,1,0;a_1)$ \\
$f_4 := (0,0;a_1)$ \\
$f_5 := (1,0,0;a_1)$ \\
$f_6 := (2,0,1;a_2)$ \\
$f_7 := (3,0,1,0;a_2)$ \\
\end{tabular}
\end{minipage}
\caption{$\exists\Pi(1)$}
\label{fig:irreduciblesexiste1}
\end{figure}

Using Remark \ref{OBS: de EPi a Pi} and the coordinates of the maximal elements in $\exists \Pi(1)$ we can build $\langle\Pi(1),E\rangle$. \figref{irredu_free1} shows this poset; we show the decreasing set corresponding to the generator $g$ with a dash line as well as terms for each principal decreasing set.

\begin{figure}[h]
\begin{center}
\includegraphics[scale=0.5]{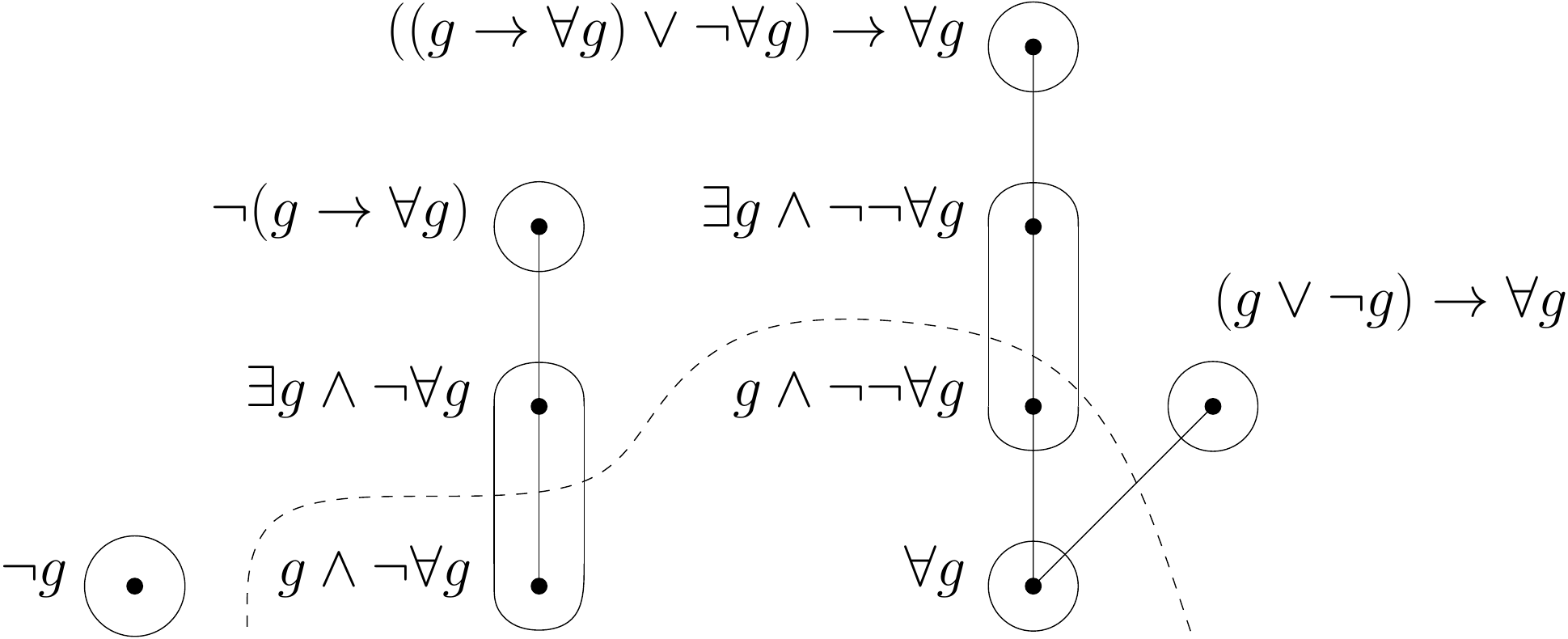}
\caption{$\langle \Pi(1), E\rangle$}
\label{irredu_free1}
\end{center}
\end{figure}
\end{example}

\begin{remark} \label{OBS: observaciones sobre la estructura de la libre}
A few observations on the structure of $\exists \Pi(n)$ can be derived from Theorem \ref{hcoverf}.
\begin{enumerate}
\item $f$ is maximal in $\Lambda (n)$ if and only if $1\notin \exists f(G)$. By \thmref{hcoverf}, it is clear that if $1\notin \exists f(G)$, then there is not $h$ covering $f$. On the other hand, let $f\in F_{(m,m_0,\dots,m_r)}$ such that $1= \exists f(a)$, for some $a\in G$. Let us consider $h\in F_{(m+1,m_0,\dots,m_r,0)}$ defined by $h(g)=f(g)$, if $g\neq a$. If $f(a)=1$, define $h(a)=a_{m+1}$, and if $f(a)\neq 1$, define $h(a)= f(a)$. By \thmref{hcoverf}, $h$ covers $f$.

\item  $f$ is minimal in $\Lambda (n)$ if and only if  $f\in F_{(m,m)}$, with $0\leq m\leq n$.  For example, if $n=1$, then $|\min\Lambda (1)| = |F_{(0,0)}| + |F_{(1,1)}| = 2 + 1 = 3$, and, if $n=2$, then $|\min\Lambda (2)|=|F_{(0,0)}|+|F_{(1,1)}|+|F_{(2,2)}|=4+5+2=11$. More generally, in the case with $n$ free generators, for $0 \leq m \leq n$ we have that $|F_{(m,m)}| = S(n,m) + 2S(n,m+1) + S(n,m+2)$, where $S(n,k)$ is the number of surjective functions from an $n$-element set onto a $k$-element set (recall that $S(n,k) = k! \left\{ n \atop k \right\}$, where $\left\{ n \atop k \right\}$ is a Stirling number of the second king).

\item Let $f\in \min \Lambda (n)$ such that $|f^{-1}(\{1\})|=j$. From \thmref{hcoverf} we know that $h$ covers $f$ if and only if $f\in F_{(m,m)}$ and $h\in F_{(m', m, m'-m-1)}$, with $0\leq m\leq n$ and $m+1\leq m'\leq m+j+1$, and where $f$ and $h$ also satisfy that
\begin{enumerate}
 \item $f(g)= a_i$ if and only if $h(g)= a_i$, for any $i$ such that $0\leq i\leq m$,
 \item $f(g)= 1$ if and only if $h(g)= a_i$, for any $i$ such that $m+1\leq i\leq m'+1$.
\end{enumerate}

If $f_1\in F_{(m'-m-1,m'-m-1)}$ is the function defined by 
\[f_1(g)=
\begin{cases}
0 & \text{ if }  h(g)= a_i, 0\leq i\leq  m, \\
a_{i-(m+1)} & \text{ if }  h(g)= a_i, m+1\leq i\leq m'+1, \\
\end{cases}
\]
then $f_1$ is clearly a minimal element of $\Lambda(n)$. Let us see that $[h)$ and $[f_1)$ are isomorphic. Indeed, if we define $\alpha\colon[h)\to  [f_1)$ by means of $\alpha (u)=v$, where 
\[v(g)=
\begin{cases}
0 & \text{ if } u(g)=a_i, 0\leq i\leq m,\\
a_{i-(m+1)} & \text{ if } u(g)=a_i, m+1\leq i\leq m', \\
\end{cases}
\]
then $\alpha $ is clearly an injection and onto mapping. By \thmref{hcoverf}, it is straightforward to see that $\alpha$ is an isomorphism.

Finally, observe that $\mathbf{F}(n) \cong \mathbf{A}_1\times \mathbf{A}_2$, where $\mathbf{A}_2= \mathbf{0} \oplus \mathbf{A}_1$.

\end{enumerate}
\end{remark} 

\begin{example}
With a little more effort we can calculate the elements of $\Lambda(2)$ and use Theorem \ref{hcoverf} to build the ordered set $\exists \Pi(2)$, which turns out to have 71 elements. From this we can produce the dual space $\langle\Pi(2), E\rangle$ of the free algebra generated by two elements; in this case it has 101 elements. The Hasse diagram is shown in Figure \ref{free_2}.

\begin{figure}[h]
\begin{center}
\includegraphics[scale=0.5]{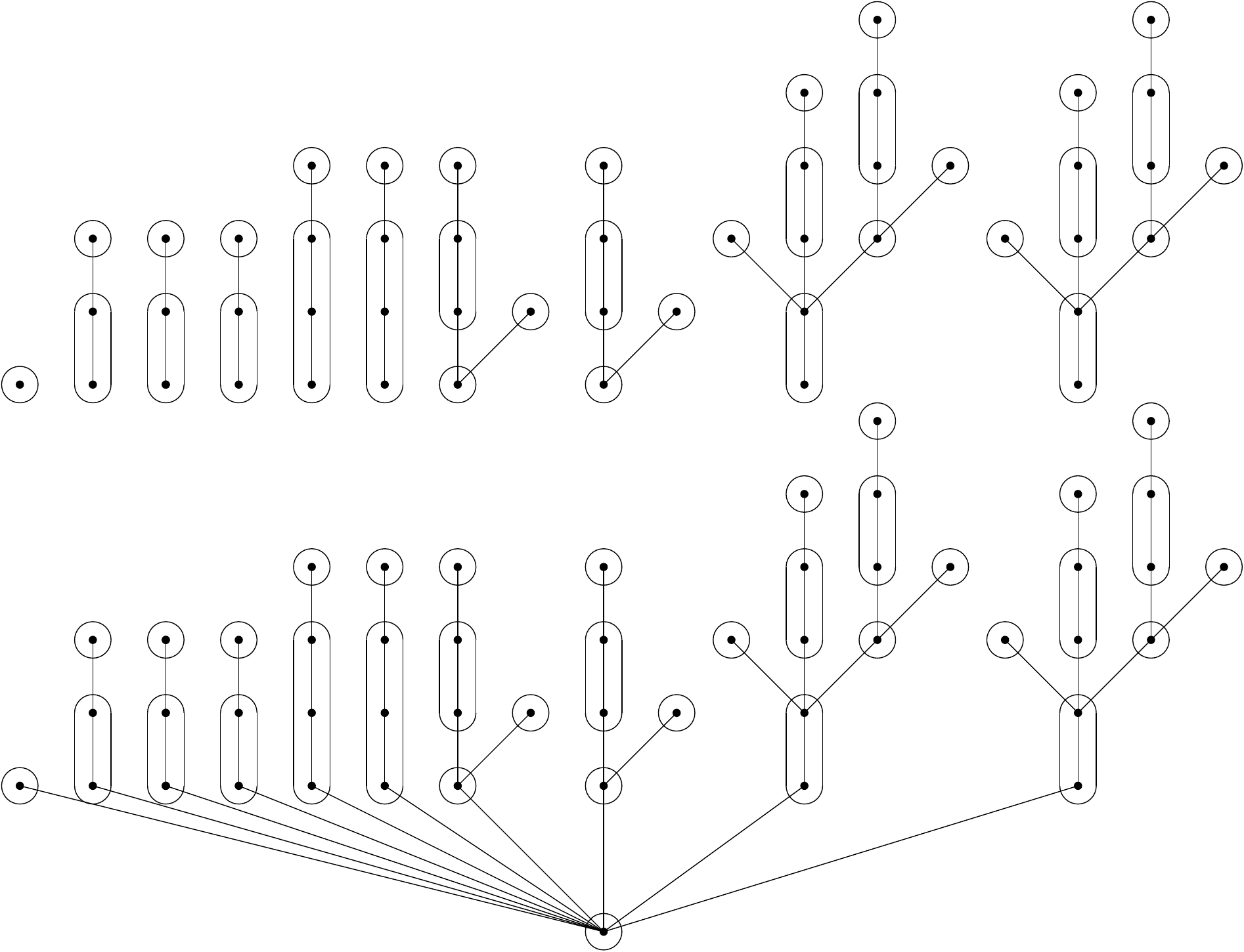}
\caption{$\langle\Pi(2), E\rangle$}
\label{free_2}
\end{center}
\end{figure}
\end{example}

\begin{remark}
Observe that, from the previous work, we can obtain the structure of the \linebreak $n$-generated free algebra $\mathbf{F}_s(n)$ in $\mathbb{H}_2^\exists \cap \mathbb{W}_1$. The subdirectly irreducible algebras in $\mathbb{H}_2^\exists \cap \mathbb{W}_1$ are the simple algebras in $\mathbb{W}_1$, that is, the algebras $\mathbf{C}_{(m,m)}$ for $m \geq 0$. Thus $\exists \Pi(\mathbf{F}_s(n))$ is isomorphic to $\min \Lambda(n)$. Therefore, $$\mathbf{F}(n) \cong \prod_{m=0}^n \mathbf{C}_{(m,m)}^{|F_{(m,m)}|}.$$ See item 2 in Remark \ref{OBS: observaciones sobre la estructura de la libre} for the values of $|F_{(m,m)}|$.
\end{remark}

\

Diego Castaño

\noindent Departamento de Matemática, Universidad Nacional del Sur (UNS), Bahía Blanca, Argentina. \\
Instituto de Matemática (INMABB), Universidad Nacional del Sur (UNS)-CONICET, Bahía Blanca, Argentina.

\noindent diego.castano@uns.edu.ar

\

Cecilia Cimadamore

\noindent Departamento de Matemática, Universidad Nacional del Sur (UNS), Bahía Blanca, Argentina. \\
Instituto de Matemática (INMABB), Universidad Nacional del Sur (UNS)-CONICET, Bahía Blanca, Argentina.

\noindent crcima@criba.edu.ar

\

José Patricio Díaz Varela

\noindent Departamento de Matemática, Universidad Nacional del Sur (UNS), Bahía Blanca, Argentina. \\
Instituto de Matemática (INMABB), Universidad Nacional del Sur (UNS)-CONICET, Bahía Blanca, Argentina.

\noindent usdiavar@criba.edu.ar

\

Laura Rueda

\noindent Departamento de Matemática, Universidad Nacional del Sur (UNS), Bahía Blanca, Argentina. \\
Instituto de Matemática (INMABB), Universidad Nacional del Sur (UNS)-CONICET, Bahía Blanca, Argentina.

\noindent laura.rueda@uns.edu.ar

\end{document}